\newcommand*{\qrr@gobblenexttocentry}[5]{}
\newcommand*{\qrr@gobblenexttocentry}[4]{}
\newcommand*{\addsection}{%
	\addtocontents{toc}{\protect\qrr@gobblenexttocentry}%
	\section}
\theoremstyle{plain}
\newtheorem{theorem}{Theorem}[section]
\newtheorem{corollary}[theorem]{Corollary}
\newtheorem{proposition}[theorem]{Proposition}
\numberwithin{equation}{section}
\theoremstyle{definition}
\newtheorem{definition}[theorem]{Definition}
\newcommand{\R}{\mathbb{R}}
\newcommand{\e}{\varepsilon}
\newcommand{\abs}[1]{\left\lvert#1\right\rvert}
\begin{document}
	
	\title[Universal Hardy-Sobolev inequalities on hypersurfaces]
	{Universal Hardy-Sobolev inequalities on hypersurfaces of Euclidean space}
	
	\author[X. Cabr\'e]{Xavier Cabr\'e}
	\author[P. Miraglio]{Pietro Miraglio}
	
\address{X.C.\textsuperscript{1,2,3}
	\textsuperscript{1} ICREA, Pg.\ Lluis Companys 23, 08010 Barcelona, Spain
	\&
	\textsuperscript{2} Universitat Polit\`ecnica de Catalunya, Departament de Matem\`{a}tiques, Diagonal 647, 08028 Barcelona, Spain
	\&
	\textsuperscript{3} BGSMath, Campus de Bellaterra, Edifici C, 08193 Bellaterra, Spain.
}
\email{xavier.cabre@upc.edu}
	
	\address{P.M.\textsuperscript{1,2},
		%\newline 
		\textsuperscript{1} Universit\`a degli Studi di Milano, Dipartimento di Matematica, via Cesare Saldini 50, 20133 Milano, Italy
		%\newline
		\textsuperscript{2} Universitat Polit\`ecnica de Catalunya, Departament
		de Matem\`{a}tiques, Diagonal 647,
		08028 Barcelona, Spain}
	\email{pietro.miraglio@upc.edu}
	
	\keywords{}
	
	\thanks{X.C. and P.M. are supported by the MINECO grant MTM2017-84214-C2-1-P and are members of the Catalan research group 2017SGR1392.
	}
	
	\begin{abstract}
		In this paper we study Hardy-Sobolev inequalities on hypersurfaces of~$\mathbb{R}^{n+1}$, all of them 
		involving a mean curvature term and having universal constants independent of the hypersurface.
		We first consider the celebrated Sobolev inequality of Michael-Simon and Allard,
		in our codimension one framework. Using their ideas, but simplifying their presentations, we give
		a quick and easy-to-read proof of the inequality. Next, we establish two new Hardy inequalities on hypersurfaces. 
		One of them originates from an application to the regularity theory of stable solutions to semilinear elliptic equations.
		The other one, which we prove by exploiting a ``ground 
		state'' substitution, improves the Hardy inequality of 
		Carron. With this same method, we also obtain an improved Hardy or Hardy-Poincaré inequality.
	\end{abstract}
	
	\maketitle
	
	\tableofcontents

\section{Introduction}
In this article we establish some new Hardy inequalities on hypersurfaces of Euclidean space. 
As the one of Carron~\cite{Car} --- for which we find an improved version --- all of them involve
a mean curvature term 
and have universal constants. Our inequalities have 
their origin in the recent work \cite{C1} by the first author on the regularity theory of 
stable solutions to semilinear elliptic equations. The paper \cite{C1} established the regularity of such 
solutions up to dimension four, for all nonlinearities, by using a foliated version of one of our new Hardy inequalities --- the one of
Theorem~\ref{thm_hardy_ibp_p2} below. In this way, \cite{C1} succeeded to greatly simplify the 2010
proof of the same result found in~\cite{C} by the first author\footnote{
In the case of nonnegative nonlinearities, regularity of stable solutions up to the optimal dimension nine has been recently obtained by Figalli, Ros-Oton, Serra, and the first author~\cite{CFRS}.
}. In addition, \cite{C} used the Michael-Simon
and Allard Sobolev inequality, which is a more sophisticated tool than our Hardy inequality. In fact,
one of the features of the current paper is that proofs are rather elementary --- even if they concern
functions defined on hypersurfaces. In particular, in Section \ref{section_ms&a} we give
a quick and easy-to-read proof of the Sobolev inequality of Michael-Simon and Allard, for completeness and since we believe it can be useful for potential readers.

Let us start presenting the inequality of Michael-Simon and Allard. 
In 1967, Miranda~\cite{Miranda} established that the Sobolev inequality holds in its Euclidean form, but 
possibly with a different constant, on every minimal hypersurface of $\R^n$.
Some years later, a more general Sobolev inequality for $k$-submanifolds of $\R^n$, not necessarily minimal, was proved independently by Michael and Simon~\cite{MS} and by Allard~\cite{A}. This inequality was subsequently generalized by Hoffman and Spruck~\cite{HS} to submanifolds of general Riemannian manifolds.

In the context of hypersurfaces of $\R^{n+1}$, i.e., submanifolds of the Euclidean space with codimension one, the Sobolev inequality reads as follows.

\begin{theorem}[Allard~\cite{A}, Michael-Simon~\cite{MS}]
	\label{thm_MS_functions} 
	Let $M$ be a smooth $n$-dimensional hypersurface of $\R^{n+1}$, $p\in[1,n)$, and $\varphi\in C^1(M)$ have compact support in $M$. 
	If $M$ is compact without boundary, any function $\varphi\in C^1(M)$ is allowed.
	
	Then, there exists a positive constant $C$, depending only on $n$ and $p$, such that
	\begin{equation}\label{ms&a}
	\lVert \varphi\rVert_{L^{p^*}(M)}^p\leq C\int_M\big(\abs{\nabla_T\varphi}^p+\abs{H\varphi}^p\big)\,dV,
	\end{equation}
	where $ p^*=np/(n-p) $ is the Sobolev exponent, $H$ is the mean curvature of $M$, and $\nabla_T$ denotes the tangential gradient to $M$.
\end{theorem}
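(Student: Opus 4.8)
The plan is to follow the classical Michael--Simon scheme --- a monotonicity inequality coming from the first variation of area, followed by a covering argument in the ambient space --- organized as transparently as possible. I would first make two standard reductions. One may assume $\varphi\ge 0$ (replace $\varphi$ by $\abs{\varphi}$ and regularize; recall $\abs{\nabla_T\abs{\varphi}}=\abs{\nabla_T\varphi}$ a.e.), and it suffices to treat the case $p=1$. Indeed, granting the inequality
\[
\lVert\psi\rVert_{L^{n/(n-1)}(M)}\le C_1\int_M\bigl(\abs{\nabla_T\psi}+\abs{H}\psi\bigr)\,dV
\]
for all admissible $\psi\ge 0$, apply it to $\psi=\varphi^{\beta}$ with $\beta:=\tfrac{p(n-1)}{n-p}>1$; using $\lVert\varphi^{\beta}\rVert_{L^{n/(n-1)}}=\lVert\varphi\rVert_{L^{p^*}}^{\beta}$, $\nabla_T\varphi^{\beta}=\beta\varphi^{\beta-1}\nabla_T\varphi$, and H\"older's inequality with exponents $p$ and $p'=p/(p-1)$ applied to $\int_M\varphi^{\beta-1}\abs{\nabla_T\varphi}\,dV$ and to $\int_M\abs{H}\varphi^{\beta}\,dV$ --- the bookkeeping closes because $(\beta-1)p'=p^*$ --- one divides by the finite quantity $\lVert\varphi\rVert_{L^{p^*}}^{\beta-1}$ and raises to the power $p$ to recover \eqref{ms&a} with $C=2^{p-1}(C_1\beta)^p$, which indeed depends only on $n$ and $p$.

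The substantive step is a monotonicity, or ``mean-value'', inequality. Fix $x\in M$; for a.e.\ $\rho>0$ apply the tangential divergence theorem on the smooth domain $M\cap B_\rho(x)$ to the vector field $X(y)=\varphi(y)\,(y-x)$. Using $\operatorname{div}_M(y-x)=n$, that the mean-curvature term is $\langle X,\vec H\rangle=\varphi\,\langle (y-x)^{\perp},\vec H\rangle$ with $\abs{(y-x)^{\perp}}\le\rho$ on $B_\rho(x)$, and that the outer conormal of $M\cap\partial B_\rho(x)$ is the normalized tangential part of $y-x$ (so $\langle X,\eta\rangle\le\rho\,\varphi$ there, while the coarea formula gives $\int_{M\cap\partial B_\rho(x)}\varphi\,d\mathcal{H}^{n-1}\le\tfrac{d}{d\rho}\int_{M\cap B_\rho(x)}\varphi\,dV$), one obtains, writing $W:=\abs{\nabla_T\varphi}+\abs{H}\varphi$,
\[
n\int_{M\cap B_\rho(x)}\varphi\,dV\;\le\;\rho\,\frac{d}{d\rho}\int_{M\cap B_\rho(x)}\varphi\,dV\;+\;\rho\int_{M\cap B_\rho(x)}W\,dV,
\]
that is, $\tfrac{d}{d\rho}\bigl(\rho^{-n}\int_{M\cap B_\rho(x)}\varphi\,dV\bigr)\ge-\rho^{-n}\int_{M\cap B_\rho(x)}W\,dV$. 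Integrating on $(0,R)$, using $\rho^{-n}\int_{M\cap B_\rho(x)}\varphi\,dV\to\omega_n\varphi(x)$ as $\rho\to0^{+}$ (smoothness of $M$ and continuity of $\varphi$, with $\omega_n$ the volume of the unit ball of $\mathbb{R}^n$) and Fubini on the error term, one reaches
\[
\omega_n\varphi(x)\;\le\;\frac{1}{R^{n}}\int_{M\cap B_R(x)}\varphi\,dV\;+\;\frac{1}{n-1}\int_{M\cap B_R(x)}\frac{W(y)}{\abs{y-x}^{\,n-1}}\,dV(y)\qquad\text{for all }x\in M,\ R>0.
\]

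To finish the case $p=1$ I would run a Vitali covering argument in $\mathbb{R}^{n+1}$. Fix $t>0$; for each $x$ with $\varphi(x)>t$ the quantity $R\mapsto R^{-n}\int_{M\cap B_R(x)}\varphi\,dV$ is continuous, exceeds $\omega_n t$ as $R\to0^{+}$, and tends to $0$ as $R\to\infty$ (compact support), so there is $R_x>0$ with $R_x^{-n}\int_{M\cap B_{R_x}(x)}\varphi\,dV=\omega_n t/2$; the mean-value inequality then forces $\tfrac{\omega_n t}{2}\le\tfrac{1}{n-1}\int_{M\cap B_{R_x}(x)}\abs{y-x}^{1-n}W(y)\,dV(y)$. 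Extracting from $\{B_{R_x}(x):\varphi(x)>t\}$ a countable, pairwise disjoint subfamily $\{B_{R_i}(x_i)\}$ with $\{\varphi>t\}\subset\bigcup_i B_{5R_i}(x_i)$, summing the above over $i$ and using the disjointness in the ambient space, one obtains a distributional estimate of the type $t\,\mathcal{H}^n(\{\varphi>t\})^{(n-1)/n}\le C(n)\,(\text{gradient mass of }\varphi\text{ near the level }t)$. Feeding this into the layer-cake bound $\lVert\varphi\rVert_{L^{n/(n-1)}(M)}\le\int_0^{\infty}\mathcal{H}^n(\{\varphi>t\})^{(n-1)/n}\,dt$ and using the coarea formula (for the $\abs{\nabla_T\varphi}$ part) and Fubini (for the $\abs{H}\varphi$ part) to recognize the right-hand side as a multiple of $\int_M W\,dV$ completes the $p=1$ case, hence the theorem.

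The main obstacle is this last step. A smooth hypersurface carries no a priori bound on $\mathcal{H}^n(M\cap B_r(x))$, so neither the covering nor any of the measure estimates that follow it can be carried out on $M$ itself: everything must be organized in Euclidean $\mathbb{R}^{n+1}$, with the mean-value inequality serving as the sole device that converts ``mass of $\varphi\,dV$ at scale $R$'' into $R^n$ and thereby couples the (possibly wild) geometry of $M$ to the gradient term. Arranging the choice of the radii $R_x$, the Vitali extraction, and the summation so that only a dimensional constant survives is precisely the technical heart of the Michael--Simon argument --- and it is exactly this part that the paper aims to present more simply; by contrast, the reductions and the monotonicity computation above are routine once the vector field $\varphi(y)(y-x)$ has been singled out.
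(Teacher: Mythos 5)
Your route is the original Michael--Simon one --- a pointwise mean-value inequality obtained from the first variation, followed by a covering argument in the ambient space --- whereas the paper deliberately avoids it: it first proves the isoperimetric inequality \eqref{MSineq} for a set $E\subset M$ (divergence theorem for the position vector on $E\cap B_\rho(y)$, Allard's Gronwall trick, then Besicovitch), and only afterwards passes to functions via Cavalieri's principle and the coarea formula applied to the level sets $\{\abs{\varphi}>t\}$. Your reduction to $p=1$ coincides with the paper's Step~2, and your monotonicity computation for the field $X(y)=\varphi(y)(y-x)$ is correct. The problem is the covering step, which you yourself flag as ``the technical heart'': as sketched it does not close, and the missing part is not routine.

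Two concrete obstructions. First, choosing $R_x$ merely as \emph{some} root of $R^{-n}\int_{M\cap B_R(x)}\varphi\,dV=\omega_n t/2$ gives no upper bound on $\int_{M\cap B_{5R_i}(x_i)}\varphi\,dV$, hence no bound on $\mathcal{H}^n(\{\varphi>t\}\cap B_{5R_i}(x_i))$; you need the \emph{largest} such root, so that $\rho^{-n}\int_{M\cap B_\rho(x)}\varphi\,dV<\omega_n t/2$ for every $\rho>R_x$. Second, and more seriously, what each disjoint ball then carries is a lower bound on a Riesz potential, $\tfrac{(n-1)\omega_n t}{2}\le\int_{M\cap B_{R_i}(x_i)}\abs{y-x_i}^{1-n}W\,dV$, and summing these over the disjoint balls bounds the \emph{number} of balls, not $\sum_i R_i^n$: the potential can be large solely because of $W$-mass concentrated at scales much smaller than $R_i$. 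Converting this into $\sum_i R_i^n\lesssim\bigl(t^{-1}\int_M W\,dV\bigr)^{n/(n-1)}$ is exactly what requires the two-scale dichotomy and covering lemma of \cite{MS} (for each $x$, either the $\varphi$-density or the $W$-density is large at some radius), which you have not supplied. Moreover, the ``distributional estimate'' you aim for cannot take the form $t\,\mathcal{H}^n(\{\varphi>t\})^{(n-1)/n}\le C\int_M W\,dV$, since feeding that into the layer-cake bound produces a divergent $\int_0^\infty t^{-1}\,dt$; one must either localize $W$ to a level band (which the mean-value inequality does not do, as its error term sees $W$ on the whole ball) or first prove a weak-type estimate and upgrade it by the truncation trick. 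None of these issues arises in the paper's argument, because there the error terms are the plain measures $\abs{\partial E\cap B_\rho(y)}$ and $\int_{E\cap B_\rho(y)}\abs H\,dV$, which Besicovitch's bounded overlap sums directly --- that is precisely the simplification the paper is advertising.
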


The constant $C$ in~\eqref{ms&a} is universal, in the sense that it depends only on the dimension $n$ and on the exponent $p$, but not on $M$. Thus, the geometry of the hypersurface plays a role just through the term involving the mean curvature $H$ appearing in the right-hand side of \eqref{ms&a}. In particular, when~$M$ is 
minimal\footnote{
Here and throughout the paper, \textit{minimal} hypersurface refers to a hypersurface which is a critical point (not necessarily a minimizer) of the area functional, i.e., a hypersurface with zero mean curvature.}, such term vanishes and we recover the Sobolev inequality proved earlier by Miranda~\cite{Miranda}.

The formulation of the Michael-Simon and Allard inequality stated in Theorem~\ref{thm_MS_functions} can be easily deduced, using standard tools, from the following isoperimetric inequality.

\begin{theorem}[Allard~\cite{A}, Michael-Simon~\cite{MS}]
	\label{thm_MS&A}
	Let $M$ be a smooth $n$-dimensional hypersurface of $\R^{n+1}$ and $E\subset M$ a smooth domain with compact closure in $M$. Then	
	\begin{equation}
	\label{MSineq}
	\lvert E \rvert^{\frac{n-1}{n}}\leq C\Big(\textnormal{Per}(E)+\int_E\lvert H \rvert\,dV\Big),
	\end{equation}
	where $H$ is the mean curvature of $M$, $\textnormal{Per}(E)$ is the perimeter of $E$, and $C$ is a constant depending only on the dimension $n$ of $M$. 
\end{theorem}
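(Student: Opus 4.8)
The plan is to prove the isoperimetric inequality \eqref{MSineq} via the classical ``monotonicity formula'' approach of Michael--Simon and Allard, adapted to the codimension one setting to keep the exposition short. Fix a smooth domain $E\subset M$ with compact closure, and let $\vec H = H\nu$ denote the mean curvature vector, where $\nu$ is a unit normal along $M$. The starting point is the tangential divergence theorem on $M$: for a vector field $X$ defined on a neighborhood of $\overline E$ in $\R^{n+1}$, one has
\begin{equation*}
\int_E \operatorname{div}_M X \, dV = \int_E X\cdot \vec H \, dV + \int_{\partial E} X\cdot \eta \, dS,
\end{equation*}
where $\eta$ is the outer conormal to $\partial E$ in $M$ and $\operatorname{div}_M$ is the tangential divergence. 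Applying this with $X(y) = y - x$ (for a fixed reference point $x\in\R^{n+1}$) and, more importantly, with the rescaled radial fields $X(y) = \gamma(|y-x|)\,(y-x)$ for suitable cutoff-type profiles $\gamma$, and using that $\operatorname{div}_M(y-x) = n - (\text{normal part})$, one derives the key differential inequality for the mass ratio
\begin{equation*}
g(r) := \frac{|E\cap B_r(x)|}{\omega_n r^n},
\end{equation*}
namely that $r\mapsto e^{\,c\,\Lambda(r)} g(r)$ is essentially monotone, where $\Lambda$ controls the contribution of $\int_{E\cap B_r}|H|$ and of $\operatorname{Per}(E)$ inside $B_r$.

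The key steps, in order, are: (i) set up the tangential divergence theorem and compute $\operatorname{div}_M X$ for radial fields, isolating the term $n\,g(r)$ that drives the monotonicity; (ii) integrate the resulting ODE inequality from a small scale to a large scale, choosing the reference point $x$ in $E$ and the radius so that on one end the mass ratio is close to $1$ (using smoothness of $M$, so that near any of its points $M$ looks flat and $g(r)\to 1$ as $r\to 0$ if $x\in E$), while on the other end $B_r$ contains all of $E$; (iii) rearrange to bound $|E|^{(n-1)/n}$ — equivalently a suitable power of the total mass — by $\operatorname{Per}(E) + \int_E |H|\,dV$, picking up only a dimensional constant. A convenient packaging, which is the one I would write out, is to prove the pointwise density bound: there is a dimensional constant $c_n$ such that if $x\in \overline E$ and $B_\rho(x)$ contains $E$ then
\begin{equation*}
1 \le c_n\,\rho^{\,n-1}\left(\frac{\operatorname{Per}(E)}{|E|} + \frac{1}{|E|}\int_E |H|\,dV\right)^{?}
\end{equation*}
— more precisely one runs the monotonicity to get $\omega_n \le c_n\big(\operatorname{Per}(E)+\int_E|H|\big)\,r_0^{-(n-1)}\cdot(\dots)$ and then optimizes in the free radius, which is exactly where the exponent $(n-1)/n$ emerges by scaling.

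The main obstacle — and the step deserving the most care — is item (i)–(ii): establishing the monotonicity formula with the curvature error term handled correctly, since $H$ is not assumed bounded or to have any sign, so one must absorb $\int |X\cdot\vec H|$ via Gronwall-type estimates and be careful that the ``bad'' boundary term from $\partial E$ enters with the right weight in $r$. A secondary technical point is the choice of the Lipschitz radial profile $\gamma$ (a standard trick is to take $\gamma_\sigma(t) = (\sigma^{-n} - t^{-n})_+$ or the two-parameter version interpolating between scales $\sigma$ and $\rho$) and then to let the profile converge to a sharp cutoff, justifying the limit by dominated convergence. Once the monotonicity is in hand, deducing \eqref{MSineq} is a short computation: apply it at a point $x\in E$ realizing (say) a near-maximal density, let the inner radius tend to $0$ where $g\to 1$ by smoothness, let the outer radius be any $\rho$ with $E\subset B_\rho(x)$, and optimize the resulting inequality over such $\rho$ — this yields \eqref{MSineq} with $C=C(n)$, as claimed. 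I would note that Theorem~\ref{thm_MS_functions} then follows from \eqref{MSineq} by the standard Federer--Fleming / coarea argument applied to the superlevel sets of $|\varphi|$ (for $p=1$) and to powers of $|\varphi|$ together with Hölder's inequality (for $1<p<n$), as announced in the text.
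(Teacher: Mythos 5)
Your setup --- the tangential divergence theorem applied to the position/radial field, the identity $\textnormal{div}_T(y-x)=n$, the resulting differential inequality for $\rho^{-n}\lvert E\cap B_\rho(x)\rvert$, and the Gronwall-type exponential factor --- is exactly the route the paper takes (it first proves $n\lvert E_\rho\rvert\le\rho\,(\textnormal{Per}(E_\rho)+\int_{E_\rho}\lvert H\rvert\,dV)$ and then the monotonicity of $\rho^{-n}\lvert E_\rho\rvert\exp(\cdots)$). The genuine gap is in your endgame. You propose to run the monotonicity at a single well-chosen point from radius $0$ (where the density ratio is $\omega_n$) out to a radius $\rho$ with $E\subset B_\rho(x)$ and then ``optimize over $\rho$''. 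This cannot work as stated: the correction factor $\exp\int_0^\rho\big(\lvert\partial E\cap B_\sigma\rvert+\int_{E_\sigma}\lvert H\rvert\,dV\big)\lvert E_\sigma\rvert^{-1}\,d\sigma$ is \emph{not} bounded by a dimensional constant once $\rho$ is comparable to the diameter of $E$. Already for $M=\R^n$, $H\equiv0$, and $E$ a thin tube of radius $\e$ and length $L$, this exponent is of order $L/\e$, so the inequality obtained after taking logarithms is vacuous; no choice of base point (even one of ``near-maximal density'') or of outer radius containing $E$ rescues this, which is why the Euclidean isoperimetric inequality does not follow from a single application of the monotonicity formula.

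What is missing are the two steps that convert the monotonicity into the global inequality \eqref{MSineq}. First, a stopping-time selection: one fixes $\rho_0=(2\lvert E\rvert\omega_n^{-1})^{1/n}$, so that $\rho_0^n\omega_n\lvert E_{\rho_0}\rvert^{-1}\ge2$ forces the exponential to be at least $2$, and hence there exists an \emph{intermediate} radius $r(y)<\rho_0$ at which the local estimate $\lvert E_{r(y)}\rvert\le C\lvert E\rvert^{1/n}\big(\lvert\partial E\cap B_{r(y)}(y)\rvert+\int_{E_{r(y)}}\lvert H\rvert\,dV\big)$ holds; this is where the exponent $1/n$ (hence $(n-1)/n$) actually enters, not from optimizing a free outer radius. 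Second, since the selected radius may be tiny and $\lvert E_{r(y)}\rvert\ll\lvert E\rvert$, the local estimates must be assembled: the balls $B_{r(y)}(y)$, $y\in E$, cover $E$, and the Besicovitch covering theorem extracts a subfamily with overlap bounded by a dimensional constant $N_n$, so that summing gives $\lvert E\rvert\le\sum_i\lvert E_{r(y_i)}\rvert\le CN_n\lvert E\rvert^{1/n}\big(\textnormal{Per}(E)+\int_E\lvert H\rvert\,dV\big)$. Neither the intermediate-radius selection nor the covering step appears in your sketch, and both are essential. (Your secondary concern about mollified radial profiles $\gamma$ is avoidable: the paper instead invokes Sard's theorem to justify the divergence theorem on $E\cap B_\rho(y)$ for almost every $\rho$.)
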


The inequalities presented in Theorems \ref{thm_MS_functions} and \ref{thm_MS&A} were proven in the seventies 
in~\cites{A,MS}, in independent works. In~\cite{A} the proof is based on establishing an isoperimetric inequality, 
like the one in Theorem~\ref{thm_MS&A}, for $k$-dimensional varifolds of~$\R^n$. From it, Theorem 
\ref{thm_MS_functions} can be easily deduced. Instead, in~\cite{MS} the authors prove directly a 
Sobolev inequality for submanifolds of $\R^n$ of any codimension. A slight modification of the argument 
in~\cite{MS}, due to Leon Simon, is presented in the monograph~\cite[Theorem~3.11]{ColMin}.

In the current paper, where we focus on the case of hypersurfaces of $\R^{n+1}$, we first 
present a quick and easy-to-read proof of the Michael-Simon and Allard inequality.
Our proof uses mainly the tools of Michael and Simon~\cite{MS} but contains two simplifications:
we target at the isoperimetric inequality (instead, \cite{MS} pursues the Sobolev inequality) and we
use a quick Gronwall-type argument from Allard~\cite{A}. 

After \cites{A,MS}, alternative proofs of the Sobolev inequality have been found. In the case of two-dimensional
minimal surfaces (with any codimension), Leon Simon gave a rather simple proof which, 
in addition, carries a constant optimal up to a factor of 2. This work remained unpublished, but is presented in~\cite{Choe,Top}. An improved version of it, which holds in any two-dimensional surface, not necessarily minimal,
was found by Topping~\cite{Top}. 
In the case of submanifolds of arbitrary dimension and codimension,
Castillon~\cite{Cas} gave a new proof of
the Michael-Simon  and Allard Sobolev inequality by using optimal transport methods. 
Finally, an important result has been obtained very recently by Brendle~\cite{Brendle}, also in the case of arbitrary dimension and codimension. He finds a new
proof of the Sobolev inequality that, in addition, carries the sharp constant in the case of
minimal submanifolds of~$\R^{n+1}$ of codimension at most two. 
This is the first time that the Michael-Simon and Allard inequality is proved in minimal submanifolds (or even minimal hypersurfaces) with the optimal Euclidean constant.
Brendle's method is a clever extension of the 
proof of the sharp Euclidean isoperimetric inequality found by the first author in~\cite{C08}. 
In Appendix~\ref{appendix_constant} we describe it in some more detail, together with 
other results about optimal constants in the Michael-Simon and Allard inequality --- a topic that has been studied
mainly in the case of submanifolds being either minimal or compact without boundary.

Our interest in the Michael-Simon and Allard inequality originates from an application of it to the regularity theory for semilinear elliptic equations.
More precisely, in 2010 the first author proved in~\cite{C} an a priori estimate for stable solutions to
$-\Delta u= f(u)$ in bounded domains of $\R^{n+1}$,
using as a key tool the Michael-Simon and Allard inequality~\eqref{ms&a} applied on every level set of $u$. 
The estimate in~\cite{C}, whose proof was quite delicate, led to the regularity of stable solutions 
in dimensions $n+1\leq4$ for every smooth nonlinearity $f$.

An alternative and much simpler proof of this same result has been recently found by the first author~\cite{C1}. This new method does not use the Michael-Simon and Allard inequality, but it is based instead on a new Hardy inequality with 
sharp constant --- also established in~\cite{C1} --- adapted to the level sets of a function $u$. In~\cite{C1}, this Hardy inequality is later used with $u$ being a stable solution to $-\Delta u= f(u)$ in a bounded domain $\Omega\subset\R^{n+1}$. To describe the new inequality, for every smooth function~$u$ we consider its radial derivative $u_r=\nabla u\cdot x/\abs{x}$. Then, for every $\varphi\in C_c^1(\Omega)$, with $\Omega\subset\R^{n+1}$ an open set, and every parameter $a\in[0,n)$, the Hardy inequality from~\cite{C1} states that
\begin{equation}\label{hardy_cabre}
\begin{split}
&(n-a)\int_{\Omega}\abs{\nabla u}\frac{\varphi^2}{\abs{x}^a}\,dx+a\int_{\Omega}\frac{u_{r}^2}{\abs{\nabla u}}\,\frac{\varphi^2}{\abs{x}^a}\,dx
\\
&\hspace{1.5cm}\leq
\left(\int_{\Omega}\abs{\nabla u}\frac{\varphi^2}{\abs{x}^a}\,dx\right)^\frac12
\left(\int_{\Omega}\abs{\nabla u} \frac{4\abs{\nabla_T\varphi}^2+\abs{H\varphi}^2}{\abs{x}^{a-2}}\,dx\right)^\frac12,
\end{split}
\end{equation}
where the tangential gradient $\nabla_T$ and the mean curvature $H$ are referred to the level sets\footnote{
	By Sard's theorem, if $u\in C^\infty$, almost every level set of $u$ is a smooth embedded hypersurface of $\R^{n+1}$.
} of $u$. 

Throughout the paper, the mean curvature $H$ is the sum, and not the arithmetic mean, of the principal curvatures. Therefore, when $M$ is the $n$-dimensional unit sphere, we have $H=n$.

Using the coarea formula, from \eqref{hardy_cabre} one can deduce the following Hardy inequality
on a single hypersurface\footnote{
For this, one applies~\eqref{hardy_cabre} with $u(x)={\rm dist }(x,M)$ in $\Omega_\varepsilon:=\{ 0<u<\e\}\cap B_R$ 
after extending $\varphi\in C^1_c(M\cap B_R)$ to be constant in the normal directions to $M$.
Then one divides the inequality by $\varepsilon$ and lets $\varepsilon\to 0$. This requires a more general version
of \eqref{hardy_cabre} in which the part of $\partial\Omega=\partial\Omega_\varepsilon$ where $\varphi\neq 0$
is divided into two open subsets with $u$ being constant on each of them (equals $0$ and $\e$ in our case).
This version of \eqref{hardy_cabre} can be proved exactly as in \cite{C1},
after checking that the foliated integration by parts formula of
Lemma 2.1 in \cite{C1} also holds for these boundary conditions.
}
$M$.
Here and throughout the paper, $C^1_c(M)$ denotes the space of~$C^1$ functions with 
compact support on~$M$. In case $M$ is a compact hypersurface without boundary, then $C^1_c(M)=C^1(M)$.

\begin{theorem}
	\label{thm_hardy_ibp_p2}
	Let $M$ be a smooth hypersurface of $\R^{n+1}$ and $a\in[0,n)$. Then, for every $\varphi\in C^1_c(M) $ we have
	\begin{equation}\label{hardy_ibp_p2}
	\begin{split}
	&(n-a)\int_M\frac{\varphi^2}{\abs{x}^a}\,dV+a\int_M\left(\frac{x}{\abs{x}}\cdot\nu_M\right)^2\frac{\varphi^2}{\abs{x}^a}\,dV
	\\ 
	&\hspace{3cm}\leq 
	\left(\int_M\frac{\varphi^2}{\abs{x}^a}\,dV\right)^\frac12
	\left(\int_M\frac{4\abs{\nabla_T\varphi}^2+\abs{H\varphi}^2}{\abs{x}^{a-2}}\,dV\right)^\frac12,
	\end{split}
	\end{equation}
	where $\nu_M$ is the unit normal to $M$ in $\R^{n+1}$.
\end{theorem}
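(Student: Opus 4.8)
The plan is to prove \eqref{hardy_ibp_p2} directly on $M$, via the divergence theorem for hypersurfaces applied to a well-chosen vector field, followed by two applications of the Cauchy--Schwarz inequality. Recall that for $X\in C^1_c(M,\R^{n+1})$ the divergence theorem on $M$ (equivalently, the first variation of area) reads
\[
\int_M\operatorname{div}_M X\,dV=-\int_M H\,(X\cdot\nu_M)\,dV,
\]
with no boundary contribution since $\varphi$ has compact support (or $M$ is closed); here $\operatorname{div}_M X=\sum_i e_i\cdot\partial_{e_i}X$ for a local orthonormal frame $\{e_i\}$ of $TM$, and the sign of $H$ is immaterial for the final inequality since $H$ will enter only through $H^2$. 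I would apply this identity with
\[
X:=\frac{\varphi^2\,x}{\abs{x}^a}.
\]

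The first step is to compute $\operatorname{div}_M X$. Write $x^\top$ for the orthogonal projection of the position vector $x$ onto $T_pM$, so that $\abs{x}^2=\abs{x^\top}^2+(x\cdot\nu_M)^2$. Using $\operatorname{div}_M x=n$, $\nabla_T\abs{x}=x^\top/\abs{x}$, $\nabla_T\varphi\cdot x=\nabla_T\varphi\cdot x^\top$, and the Leibniz rule $\operatorname{div}_M(fY)=f\operatorname{div}_M Y+\nabla_T f\cdot Y$, a short computation gives
\[
\operatorname{div}_M X=\frac{\varphi^2}{\abs{x}^a}\left((n-a)+a\Big(\tfrac{x}{\abs{x}}\cdot\nu_M\Big)^2\right)+\frac{2\varphi\,\nabla_T\varphi\cdot x^\top}{\abs{x}^a}.
\]
Since $X\cdot\nu_M=\frac{\varphi^2}{\abs{x}^{a-1}}\big(\tfrac{x}{\abs{x}}\cdot\nu_M\big)$, inserting this into the divergence theorem and rearranging yields the identity
\[
(n-a)\int_M\frac{\varphi^2}{\abs{x}^a}\,dV+a\int_M\Big(\tfrac{x}{\abs{x}}\cdot\nu_M\Big)^2\frac{\varphi^2}{\abs{x}^a}\,dV
=-\int_M\frac{\varphi}{\abs{x}^{a-1}}\left(H\varphi\,\Big(\tfrac{x}{\abs{x}}\cdot\nu_M\Big)+2\,\nabla_T\varphi\cdot\tfrac{x^\top}{\abs{x}}\right)dV,
\]
whose left-hand side is nonnegative because $a\in[0,n)$.

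It then remains to bound the right-hand side. Taking absolute values and applying Cauchy--Schwarz in $L^2(M,dV)$ with the splitting $\abs{x}^{1-a}=\abs{x}^{-a/2}\,\abs{x}^{1-a/2}$ bounds it by
\[
\left(\int_M\frac{\varphi^2}{\abs{x}^a}\,dV\right)^{1/2}\left(\int_M\frac{\big(H\varphi\,(\tfrac{x}{\abs{x}}\cdot\nu_M)+2\,\nabla_T\varphi\cdot\tfrac{x^\top}{\abs{x}}\big)^2}{\abs{x}^{a-2}}\,dV\right)^{1/2}.
\]
Finally, the pointwise Cauchy--Schwarz inequality applied to the pair $(H\varphi,\,2\nabla_T\varphi)$ and $\big(\tfrac{x}{\abs{x}}\cdot\nu_M,\,\tfrac{x^\top}{\abs{x}}\big)$ in $\R\oplus T_pM$, combined with the identity $\big(\tfrac{x}{\abs{x}}\cdot\nu_M\big)^2+\big\lvert\tfrac{x^\top}{\abs{x}}\big\rvert^2=1$, gives
\[
\left(H\varphi\,\Big(\tfrac{x}{\abs{x}}\cdot\nu_M\Big)+2\,\nabla_T\varphi\cdot\tfrac{x^\top}{\abs{x}}\right)^2\le \abs{H\varphi}^2+4\abs{\nabla_T\varphi}^2,
\]
and substituting this into the previous bound gives exactly \eqref{hardy_ibp_p2}.

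No step is genuinely difficult; the computation of $\operatorname{div}_M X$ is the only place that requires care. The single technical point is the possible non-integrable singularity of $X$ at the origin when $0\in M$: one handles it by applying the divergence theorem on $M\setminus B_\rho$ and letting $\rho\to0$, noting that the conormal flux across $M\cap\partial B_\rho$ is $O(\rho^{n-a})\to0$ (here the hypothesis $a<n$ is used), while the same condition $a<n$ ensures that all integrals above converge. The computation also makes transparent where the constants come from: the factor $4$ is the square of the factor $2$ produced by differentiating $\varphi^2$, and the normalization $\big(\tfrac{x}{\abs{x}}\cdot\nu_M\big)^2+\big\lvert\tfrac{x^\top}{\abs{x}}\big\rvert^2=1$ is exactly what allows the curvature term $a\big(\tfrac{x}{\abs{x}}\cdot\nu_M\big)^2$ on the left to be absorbed without any loss. (Alternatively, as already indicated in the text, \eqref{hardy_ibp_p2} also follows from \eqref{hardy_cabre} by a limiting argument, applied with $u=\operatorname{dist}(\cdot\,,M)$.)
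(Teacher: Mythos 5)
Your proposal is correct and is essentially the paper's own argument: integrating the identity $\operatorname{div}_T x=n$ against $\varphi^2/\abs{x}^a$ (equivalently, applying the first variation formula to $X=\varphi^2 x/\abs{x}^a$), splitting $x$ into tangential and normal parts to produce the two left-hand terms, and then concluding with Cauchy--Schwarz, where the decomposition $(\tfrac{x}{\abs{x}}\cdot\nu_M)^2+\abs{x^\top/\abs{x}}^2=1$ plays the same role as the paper's orthogonality of $\nabla_T\varphi$ and $\mathcal{H}$ in yielding $4\abs{\nabla_T\varphi}^2+\abs{H\varphi}^2$. Your explicit handling of the possible singularity at the origin via excision of $B_\rho$ is a small point of extra care not spelled out in the paper, but the substance of the proof is the same.
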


In this paper we present a direct proof of Theorem \ref{thm_hardy_ibp_p2}
which does not rely on the more involved proof in~\cite{C1} of its foliated version \eqref{hardy_cabre}. 
Then, using the coarea formula, 
we deduce~\eqref{hardy_cabre} from \eqref{hardy_ibp_p2} --- see Corollary~\ref{cor_hardy} and its proof. 
Moreover, in Theorem~\ref{thm_hardy_ibp} we give a version of~\eqref{hardy_ibp_p2}, and 
thus of~\eqref{hardy_cabre}, for an arbitrary exponent $p\geq1$ instead of $p=2$. 
Our proof of Theorem~\ref{thm_hardy_ibp_p2} is elementary and based on the use of 
the tangential derivatives~$\delta_i$, which we recall in Appendix~\ref{appendix_giusti}.

Note that when $M=\R^n$, $n\geq3$, and $a=2$, then \eqref{hardy_ibp_p2} 
is the Euclidean Hardy inequality with best constant, 
\begin{equation}
\label{flat_hardy}
\frac{\left(n-2\right)^2}{4}\int_{\R^n}\frac{\varphi^2}{\abs{x}^2}\,dx
\leq
\int_{\R^n} \abs{\nabla \varphi}^2\,dx,
\end{equation}
since the second term in the left-hand side of \eqref{hardy_ibp_p2} vanishes.
Instead, when $M$ is close to a sphere in $\R^{n+1}$ centered at the origin, such term becomes important
and could even make larger the constant $n-a$ in the first 
term in the left-hand side of~\eqref{hardy_ibp_p2}. This is one of the interesting points of  
our result. Note, however, that \eqref{hardy_ibp_p2} is trivial when $M=S^n$, since $H\equiv n$.

The foliated version \eqref{hardy_cabre} of our Hardy inequality  was used in \cite{C1} to establish 
the boundedness of stable solutions to semilinear elliptic equations up to dimension $n+1 \leq 4$ for all nonlinearities. 
Thanks to our improved version, which includes the second term on its left-hand side, the same proof gave, in the radial case, regularity up to the optimal dimension $n+1\leq9$ ---
since one has $u_r^2=\abs{\nabla u}^2$ in its left hand side for radial solutions.
In the nonradial case, the optimal result in dimension $n+1\leq9$ has been recently obtained, for nonnegative nonlinearities, by 
Figalli, Ros-Oton, Serra, and the first author~\cite{CFRS}.
This result, whose proof does not rely on Hardy-Sobolev inequalities, gives a complete answer to a 
long standing open question posed by Brezis~\cite{B} and by Brezis and V\'azquez~\cite{BV}.

The application of inequality \eqref{hardy_cabre} to the regularity theory of stable solutions has 
been extended by the second author in \cite{Mi} to nonlinear equations involving the $p$-Laplacian. 
It is worth pointing out here that this is done using the quadratic version \eqref{hardy_cabre} of 
the Hardy inequality on the level sets, and not the one for a general exponent $p$ stated in 
Corollary \ref{cor_hardy}.

A related but different Hardy inequality on hypersurfaces of $\R^{n+1}$ was proved in 1997 by Carron~\cite{Car}. 
It states that in every dimension $n\geq3$ and for all $\varphi\in C^1_c(M) $ it holds that
\begin{equation}\label{carron}
\frac{(n-2)^2}{4}\int_M\frac{\varphi^2}{\abs{x}^2}\,dV 
\leq 
\int_M \bigg(\abs{\nabla_T\varphi}^2+ \frac{n-2}{2}\,\frac{\abs{H}\varphi^2}{\abs{x}}\bigg)\,dV.
\end{equation}
In particular, this established that the Hardy inequality in its Euclidean form and with its best constant 
holds in every minimal hypersurface of $\R^{n+1}$. Observe that this also follows from our 
Theorem~\ref{thm_hardy_ibp_p2} by taking $a=2$. Also in the context of minimal hypersurfaces, 
in Section~\ref{section_hardy} we will prove an analogue sharp Hardy inequality with exponent $p\neq2$, 
namely, \eqref{p_hardy_flat}. Even if not explicitly mentioned in~\cite{KO1},
inequality~\eqref{p_hardy_flat} also follows by the results of 
Kombe and Özaydin~\cite[Theorem 2.1]{KO1}.\footnote{
One uses \cite[Theorem 2.1]{KO1} with $\alpha=0$ and $\rho=\abs{x}$, together with the well-known inequality
$\Delta \rho \geq (n-1)/\rho$ involving the Laplace-Beltrami operator, 
which holds if $H\equiv 0$ as we show in the beginning of subsection~3.2. 
}

In~\cite{Car} Carron proved also an intrinsic Hardy inequality on Cartan-Hadamard manifolds. 
His work gave rise to numerous papers in the topic of Hardy inequalities on manifolds,
some of which are commented on next.
Carron's work was extended to general Riemannian manifolds by Kombe and Özaydin~\cites{KO1,KO2}, 
who also included the case of a general exponent~$p$ instead of only $p=2$.
Some intrinsic Hardy inequalities 
with general weights, not necessarily of the power type, are studied by D'Ambrosio and Dipierro~\cite{DaDi}.
The case of the hyperbolic space~$\mathbb{H}^n$ and related manifolds is treated by
Berchio, Ganguly, Grillo, and Pinchover~\cites{BGG, BGGP}, obtaining sharp constants and improved 
versions of the inequality.
Finally, let us mention the recent work of Batista, Mirandola, and Vit\'orio~\cite{BMV1} 
improving Carron's inequality with power weights
in the setting of manifolds isometrically immersed in Cartan-Hadamard manifolds.

In Theorem~\ref{thm_hardy_gs_p2} below, we obtain an improved version of Carron's inequality~\eqref{carron} in the case of 
hypersurfaces of $\R^{n+1}$ by adding a nonnegative term on its left-hand side (the same term as in the inequality of Theorem \ref{thm_hardy_ibp_p2} with $a=2$).
We could not find such additional term within the literature on Hardy's inequalities.
In addition, our method of proof towards Hardy's inequalities 
is different from the ones in~\cites{BMV1,Car}, for instance.

\begin{theorem}
	\label{thm_hardy_gs_p2}
	Let $M$ be a smooth hypersurface of $\R^{n+1}$ with $ n\geq3 $. Then, for every $\varphi\in C^1_c(M) $ we have
	\begin{equation}\label{hardy_car}
	\begin{split}
	&\hspace{-0.5cm}\frac{(n-2)^2}{4}\int_M\frac{\varphi^2}{\abs{x}^2}\,dV 
	+
	\frac{n^2-4}{4}\int_M\left(\frac{x}{\abs{x}}\cdot\nu_M\right)^2\frac{\varphi^2}{\abs{x}^2}\,dV
	\\
	&\hspace{5cm}\leq 
	\int_M \bigg(\abs{\nabla_T\varphi}^2+ \frac{n-2}{2}\,\frac{\abs H\varphi^2}{\abs{x}}\bigg)\,dV,
	\end{split}
	\end{equation}
	where $\nu_M$ is the unit normal to $M$ in $\R^{n+1}$.
\end{theorem}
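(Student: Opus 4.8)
We prove \eqref{hardy_car} by a ``ground state'' substitution, the ground state being $w:=\abs{x}^{-\alpha}$ with $\alpha:=(n-2)/2$ --- the restriction to $M$ of the extremal of the flat Hardy inequality \eqref{flat_hardy} in $\R^n$. Given $\varphi\in C^1_c(M)$, write $\varphi=wu$ with $u:=\abs{x}^{\alpha}\varphi$. The first step is the identity
\[
\int_M\abs{\nabla_T\varphi}^2\,dV=\int_M w^2\abs{\nabla_T u}^2\,dV-\int_M\frac{\Delta_M w}{w}\,\varphi^2\,dV,
\]
where $\Delta_M$ is the Laplace--Beltrami operator of $M$. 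It follows by expanding $\abs{\nabla_T(wu)}^2=w^2\abs{\nabla_T u}^2+u^2\abs{\nabla_T w}^2+\nabla_T(u^2)\cdot(w\nabla_T w)$ and integrating the last term by parts on $M$: since the tangential field $u^2 w\nabla_T w$ has compact support, the intrinsic divergence theorem gives $\int_M\nabla_T(u^2)\cdot(w\nabla_T w)\,dV=-\int_M u^2\big(\abs{\nabla_T w}^2+w\Delta_M w\big)\,dV$ --- with \emph{no} mean-curvature contribution, precisely because the field is tangent to $M$ --- and the two copies of $u^2\abs{\nabla_T w}^2$ cancel. When $0\in M$, one first takes $\varphi$ supported away from the origin and then removes this restriction by truncating $w$ near $0$, using $n\ge 3$.

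The crux is the computation of $\Delta_M w$. Via the chain rule $\Delta_M\big(g(\abs{x})\big)=g''(\abs{x})\,\abs{\nabla_T\abs{x}}^2+g'(\abs{x})\,\Delta_M\abs{x}$ and the two geometric identities
\[
\abs{\nabla_T\abs{x}}^2=1-\left(\tfrac{x}{\abs{x}}\cdot\nu_M\right)^2,\qquad \Delta_M\abs{x}^2=2n-2H\,(x\cdot\nu_M),
\]
we obtain, writing $\theta:=\tfrac{x}{\abs{x}}\cdot\nu_M$, that $\Delta_M\abs{x}=\big(n-1+\theta^2\big)\abs{x}^{-1}-H\theta$, and hence
\[
-\frac{\Delta_M w}{w}=\alpha\,\frac{n-(\alpha+2)(1-\theta^2)}{\abs{x}^2}-\alpha\,\frac{H\theta}{\abs{x}}.
\]
The identity for $\abs{\nabla_T\abs{x}}^2$ is the Pythagorean decomposition $\abs{\nabla_T\abs{x}}^2=\abs{\nabla\abs{x}}^2-(\nabla\abs{x}\cdot\nu_M)^2$; the one for $\Delta_M\abs{x}^2$ is a direct consequence of the tangential calculus of Appendix~\ref{appendix_giusti} (expand $\sum_i\delta_i\delta_i\abs{x}^2$ using $\delta_i x_j=\delta_{ij}-(\nu_M)_i(\nu_M)_j$ and $\sum_i\delta_i(\nu_M)_i=H$), equivalently of the trace formula $\Delta_M(f|_M)=\Delta f-\mathrm{Hess}\,f(\nu_M,\nu_M)-H\,\partial_{\nu_M}f$ applied to the ambient function $f=\abs{x}^2$.

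Inserting the last display into the first identity and discarding the nonnegative term $\int_M w^2\abs{\nabla_T u}^2\,dV$ gives
\[
\int_M\abs{\nabla_T\varphi}^2\,dV+\alpha\int_M\frac{H\theta}{\abs{x}}\,\varphi^2\,dV\geq \alpha\int_M\frac{n-(\alpha+2)(1-\theta^2)}{\abs{x}^2}\,\varphi^2\,dV.
\]
Since $\alpha\ge 0$ and $\abs{\theta}\le 1$, we have $\alpha H\theta/\abs{x}\le\alpha\abs{H}/\abs{x}$ pointwise, so the left-hand side is at most $\int_M\abs{\nabla_T\varphi}^2\,dV+\alpha\int_M\frac{\abs{H}\varphi^2}{\abs{x}}\,dV$. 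It only remains to note the elementary identities $\alpha(\alpha+2)=\tfrac{n^2-4}{4}$ and $\alpha n-\alpha(\alpha+2)=\tfrac{(n-2)^2}{4}$, which yield $\alpha\big[n-(\alpha+2)(1-\theta^2)\big]=\tfrac{(n-2)^2}{4}+\tfrac{n^2-4}{4}\,\theta^2$ and thus turn the last inequality into exactly \eqref{hardy_car}. (Leaving $\alpha$ a free parameter until this final step produces a one-parameter family of inequalities, of which \eqref{hardy_car} is the member matching the constant in Carron's inequality \eqref{carron}.)

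The main obstacle is the geometric identity $\Delta_M\abs{x}^2=2n-2H(x\cdot\nu_M)$ on an arbitrary hypersurface: one must handle with care the tangential derivatives of the Gauss map and fix the sign convention for $H$ (normalized so that $H=n$ on the unit sphere). Everything else --- the ground state algebra and the discarding of the gradient term --- is routine, and the only further technicality is the justification of the integration by parts when the origin lies on $M$.
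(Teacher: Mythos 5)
Your proof is correct and follows essentially the same route as the paper: the ground-state substitution $\varphi=\abs{x}^{-(n-2)/2}u$, an integration by parts on $M$ in which the mean-curvature term drops out because the field is tangent, and the identity $\textnormal{div}_T x_T=n-(x\cdot\nu_M)H$ driving the computation of the tangential Laplacian of the ground state, with the same final constants. Your explicit remark on truncating the ground state when $0\in M$ is a welcome technical detail that the paper leaves implicit.
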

As in Theorem \ref{thm_hardy_ibp_p2}, the second term in the left-hand side of \eqref{hardy_car} is of special interest when $M$ is close to be a sphere of $\R^{n+1}$ centered at the origin.

We prove Theorem~\ref{thm_hardy_gs_p2} using a technique which, in the case of the Euclidean space, is known as ground state substitution. It dates back at least to the time of Jacobi and it has been applied for instance in the spectral theory of Laplace and Schrödinger operators. It is based on writing the function $\varphi$ as $\varphi=v\omega $, where typically $ \omega $ is a positive solution of the Euler-Lagrange equation of the energy functional associated with the inequality.
This method has been used in the Euclidean setting by Brezis and V\'azquez~\cite{BV} to obtain an improved Hardy inequality in $\R^n$, stated in~\eqref{bv_impr_hardy} below. 
The ground state substitution is essentially equivalent to the use of a Picone identity, as done in 
Abdellaoui, Colorado, and Peral~\cite{ACP}, where the authors also obtained some improved Hardy 
inequalities in domains of $\R^n$.
More recently, Frank and Seiringer~\cite{FS} used the ground state substitution to prove fractional Hardy inequalities in~$\R^n$. 
We will use this method in the framework of functions defined on a hypersurface of the Euclidean space --- something that we could not find in previous literature. In our proof we will take $\omega(x)=\abs{x}^{-(n-2)/2}$.

The two inequalities of Hardy type in Theorems~\ref{thm_hardy_ibp_p2} and~\ref{thm_hardy_gs_p2} are different in their formulations and independent in their proofs. Their statements differ mainly in the mean curvature term, containing $H^2$ versus $\abs{H}/\abs{x}$, respectively. At the same time, their proofs use distinct techniques. In addition, our proof of Theorem~\ref{thm_hardy_ibp_p2} works for an arbitrary exponent $p\geq1$ ---  see Theorem~\ref{thm_hardy_ibp} for the general statement --- while the one of Theorem \ref{thm_hardy_gs_p2} gives a significant result only in the case $p=2$.
Indeed, with our technique one can prove a $p$-version of \eqref{hardy_car}, but it is of less interest due to the presence of the second fundamental form in its right-hand side (instead of only the mean curvature). Moreover, its left-hand side contains some factors $(\abs{x_T}/\abs{x})^{p-2}$, where $x_T$ is the tangential part of the position vector $x$.

As a simple interpolation of the Michael-Simon and Allard inequality and of Theorem~\ref{thm_hardy_ibp_p2} with $a=2$, we obtain the following Hardy-Sobolev inequality on hypersurfaces of $ \R^{n+1} $.

\begin{corollary}\label{cor_ckn_p2}
	Let $M$ be a smooth hypersurface of $\R^{n+1}$ with $n\geq3$, $ b\in[0,1] $, and $\varphi\in C^1_c(M)$. Then, there exists a positive constant $C$ depending only on the dimension $n$, such that
	\begin{equation}\label{h_s_p2}
	\left(\int_M\frac{\abs{\varphi}^{\frac{2(n-2b)}{n-2}}}{\abs{x}^{2b}}\,dV\right)^\frac{n-2}{n-2b}\leq C\int_M\big(\abs{\nabla_T\varphi}^2+\abs{H\varphi}^2\big)\,dV.
	\end{equation}
\end{corollary}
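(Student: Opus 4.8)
The plan is to obtain \eqref{h_s_p2} as a H\"older interpolation between the Michael-Simon and Allard inequality of Theorem~\ref{thm_MS_functions} with $p=2$ (the endpoint $b=0$) and a Hardy inequality with weight $\abs{x}^{-2}$ (the endpoint $b=1$). First I would extract the latter from Theorem~\ref{thm_hardy_ibp_p2} by taking $a=2$, so that $\abs{x}^{a-2}=1$: discarding the nonnegative second term on the left-hand side of \eqref{hardy_ibp_p2}, dividing by $\left(\int_M\varphi^2\abs{x}^{-2}\,dV\right)^{1/2}$ (the case $\varphi\equiv0$ being trivial, and all the integrals involved being finite because $\varphi\in C^1_c(M)$ and $2<n$), squaring, and using $4\abs{\nabla_T\varphi}^2+\abs{H\varphi}^2\le 4(\abs{\nabla_T\varphi}^2+\abs{H\varphi}^2)$, one arrives at
\[
\int_M\frac{\varphi^2}{\abs{x}^2}\,dV\le\frac{4}{(n-2)^2}\int_M\big(\abs{\nabla_T\varphi}^2+\abs{H\varphi}^2\big)\,dV.
\]

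Next, writing $2^*=2n/(n-2)$ and letting $q=2(n-2b)/(n-2)$ be the exponent appearing in \eqref{h_s_p2}, I would record the pointwise identity
\[
\frac{\abs{\varphi}^{q}}{\abs{x}^{2b}}=\left(\frac{\varphi^2}{\abs{x}^2}\right)^{b}\big(\abs{\varphi}^{2^*}\big)^{1-b},
\]
which holds because $q-2b=2n(1-b)/(n-2)=(1-b)\,2^*$. For $b\in(0,1)$, H\"older's inequality with conjugate exponents $1/b$ and $1/(1-b)$ then gives
\[
\int_M\frac{\abs{\varphi}^{q}}{\abs{x}^{2b}}\,dV\le\left(\int_M\frac{\varphi^2}{\abs{x}^2}\,dV\right)^{b}\left(\int_M\abs{\varphi}^{2^*}\,dV\right)^{1-b}.
\]
Setting $I:=\int_M(\abs{\nabla_T\varphi}^2+\abs{H\varphi}^2)\,dV$, I would insert the Hardy bound from the previous paragraph into the first factor and the Michael-Simon and Allard inequality $\left(\int_M\abs{\varphi}^{2^*}\,dV\right)^{2/2^*}\le C I$ (Theorem~\ref{thm_MS_functions} with $p=2$, admissible since $n\ge 3$) into the second, and then raise everything to the power $(n-2)/(n-2b)$. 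The crucial point is that the exponents of $I$ then add up to exactly
\[
\frac{b(n-2)}{n-2b}+\frac{n}{n-2}\cdot\frac{(1-b)(n-2)}{n-2b}=\frac{b(n-2)+n(1-b)}{n-2b}=1,
\]
which yields \eqref{h_s_p2} with some constant $C=C(n,b)$.

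Finally, to get a constant depending only on $n$, I would note that $C(n,b)$ is a continuous function of $b$ on the compact interval $[0,1]$, hence bounded there by a constant $C=C(n)$; moreover the two endpoints are already covered, since $b=0$ reduces \eqref{h_s_p2} to the Michael-Simon and Allard inequality \eqref{ms&a} with $p=2$, and $b=1$ reduces it to the Hardy inequality displayed above. There is no real obstacle in this argument: the only steps requiring a little care are the exponent bookkeeping in the H\"older step --- in particular the identity $q-2b=(1-b)\,2^*$ --- and the verification that the powers of $I$ sum to $1$, which is exactly what forces the value $q=2(n-2b)/(n-2)$ and the exponent $(n-2)/(n-2b)$ appearing in the statement.
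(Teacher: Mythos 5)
Your proposal is correct and follows essentially the same route as the paper: the paper proves the general-$p$ version (Corollary~\ref{cor_ckn}) by deriving the endpoint Hardy bound from \eqref{hardy_ibp_p} with $a=p$, interpolating against the Michael--Simon and Allard inequality via H\"older with exponents $1/b$ and $1/(1-b)$, and checking that the powers of the right-hand side sum to $\beta=(n-bp)/(n-p)$, exactly your bookkeeping specialized to $p=2$. The only cosmetic differences are that you obtain the Hardy endpoint from the orthogonality identity $\abs{2\nabla_T\varphi-\mathcal{H}\varphi}^2=4\abs{\nabla_T\varphi}^2+\abs{H\varphi}^2$ rather than the convexity bound $2^{p-1}(\abs{a}^p+\abs{b}^p)$, and that you make the constant independent of $b$ by compactness of $[0,1]$ where the paper uses $\beta>1$ and $C^{1/\beta}\le C$ for $C\ge1$.
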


Corollary~\ref{cor_ckn}, which is the general version of \eqref{h_s_p2} with exponents $ p\in[1,n) $, covers some possible choices of the parameters in Caffarelli-Kohn-Nirenberg type inequalities on hypersurfaces. Indeed, in~\cite{BMV}, Batista, Mirandola, and Vit\'orio prove a Caffarelli-Kohn-Nirenberg inequality for submanifolds of Riemannian manifolds, from which Corollary~\ref{cor_ckn} can be deduced, perhaps with a different constant.
However, the proof in~\cite{BMV} is delicate and relies on Riemannian geometry techniques, while we easily show Corollary~\ref{cor_ckn} as an interpolation of our previous results in the setting of hypersurfaces of~$\R^{n+1}$.

The classical Hardy's inequality has been improved in the Euclidean setting in many ways, see 
for instance~\cites{ACP, DD, ACR, BV, VZ, BFT}.
Many of these improvements consist of adding a positive term on the left-hand side of the inequality. 
This additional term has to be of lower order than the Hardy integral, by the optimality of the constant $(n-2)^2/4$.
This is done for example by Brezis and V\'azquez in~\cite[Theorem~4.1]{BV}, where they get an improvement in the Poincaré sense. Namely, they control both a Hardy-type integral and the $ L^2 $-norm of a function in terms of the $ L^2 $-norm of its gradient. For any bounded domain $ \Omega \subset \R^n $, any dimension $ n\geq2 $ and for every function $\varphi\in H_0^1(\Omega) $, their result states that
\begin{equation}\label{bv_impr_hardy}
\frac{\left(n-2\right)^2}{4}\int_\Omega\frac{\varphi^2}{\abs{x}^2}\,dx+ H_2\left(\frac{\omega_n}{\abs{\Omega}}\right)^\frac2n\int_\Omega\varphi^2\,dx
\leq
\int_\Omega \abs{\nabla \varphi}^2\,dx,
\end{equation}
where $H_2$ is the first eigenvalue of the Laplacian in the unit ball of $\R^2$, 
hence positive and independent of $n$, and $\omega_n$ is the measure of the unit ball in $\R^n$.

Using the ground state substitution as in the proof of Theorem~\ref{thm_hardy_gs_p2}, we prove the following 
analogue of the improved Hardy inequality by Brezis and V\'azquez, now on hypersurfaces of $\R^{n+1}$. 
We require functions to have compact support on the hypersurface $M$ intersected with a ball of radius~$r$
in the ambient space.
\begin{theorem}
	\label{thm_hardy+_p2}
	Let $M$ be a smooth hypersurface of $\R^{n+1}$ with $ n\geq2 $, and $B_r=B_r(0)\subset\R^{n+1}$ be the $(n+1)$-dimensional open ball of radius $r$ centered at the origin. 
	
	Then, for every $\varphi\in C^1_c(B_r\cap M) $ we have
	\begin{equation}
	\label{imprhardy_p2}
	\begin{split}
	&\hspace{-0.1cm}\frac{(n-2)^2}{4}\int_M\frac{\varphi^2}{\abs{x}^2}\,dV
	+
	\frac{n^2-4}{4}\int_M\left(\frac{x}{\abs{x}}\cdot\nu_M\right)^2\frac{\varphi^2}{\abs{x}^2}\,dV
	+ \frac{1}{2r^2}\int_M \varphi^2\,dV
	\\ 
	&\hspace{3.7cm}\leq
	\int_M \left(\lvert\nabla_T\varphi\rvert^2 + \frac{n-2}{2}\,\frac{\abs{H}\varphi^2}{\abs x}+\frac14\abs{H\varphi}^2\right)\,dV,
	\end{split}
	\end{equation}
	where $\nu_M$ is the unit normal to $M$ in $\R^{n+1}$.
\end{theorem}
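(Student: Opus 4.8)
The plan is to combine a ``ground state'' substitution with a first-variation (divergence-theorem) argument on~$M$. Set $\omega(x)=\abs{x}^{-(n-2)/2}$ and write $\varphi=\omega v$. Expanding $\nabla_T\varphi=\omega\nabla_T v+v\nabla_T\omega$ and integrating the resulting cross term by parts on~$M$ (no boundary term appears, since $\varphi$ has compact support) yields $\int_M\abs{\nabla_T\varphi}^2\,dV=\int_M\omega^2\abs{\nabla_T v}^2\,dV-\int_M v^2\,\omega\,\Delta_M\omega\,dV$, where $\Delta_M$ denotes the Laplace--Beltrami operator. To evaluate $\omega\,\Delta_M\omega$ I would use the identities $\operatorname{div}_M x=n$ and $\Delta_M(\abs{x}^2)=2n-2H(x\cdot\nu_M)$, the latter following from $\Delta_M x=-H\nu_M$ (all expressible through the tangential-derivative calculus of Appendix~\ref{appendix_giusti}), together with the chain rule for radial functions. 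The outcome is the identity
\begin{equation*}
\begin{split}
&\frac{(n-2)^2}{4}\int_M\frac{\varphi^2}{\abs{x}^2}\,dV+\frac{n^2-4}{4}\int_M\left(\frac{x}{\abs{x}}\cdot\nu_M\right)^2\frac{\varphi^2}{\abs{x}^2}\,dV+\int_M\abs{Z}^2\,dV\\
&\hspace{2cm}=\int_M\abs{\nabla_T\varphi}^2\,dV+\frac{n-2}{2}\int_M\frac{H(x\cdot\nu_M)}{\abs{x}^2}\,\varphi^2\,dV,
\end{split}
\end{equation*}
where $Z:=\nabla_T\varphi+\tfrac{n-2}{2}\,\varphi\,\abs{x}^{-2}x_T$, with $x_T$ the tangential part of~$x$, so that $\abs{Z}^2=\omega^2\abs{\nabla_T v}^2$. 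Discarding the nonnegative term $\int_M\abs{Z}^2\,dV$ and bounding $H(x\cdot\nu_M)\leq\abs{H}\abs{x}$ gives exactly Theorem~\ref{thm_hardy_gs_p2}; for~\eqref{imprhardy_p2} the new idea is to keep this term and bound it from below.

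To do so I would exploit the support condition $\operatorname{supp}\varphi\subset B_r\cap M$. Applying the tangential divergence theorem on~$M$ to the vector field $\varphi^2 x$ and using $\operatorname{div}_M x=n$ gives the first-variation identity $n\int_M\varphi^2\,dV=\int_M H\varphi^2(x\cdot\nu_M)\,dV-2\int_M\varphi\,\nabla_T\varphi\cdot x_T\,dV$. Substituting $\nabla_T\varphi=Z-\tfrac{n-2}{2}\varphi\abs{x}^{-2}x_T$, the last integral becomes $2\int_M\varphi\,Z\cdot x_T\,dV-(n-2)\int_M\varphi^2\abs{x_T}^2\abs{x}^{-2}\,dV$; since $\abs{x_T}\leq\abs{x}$ and $\abs{x\cdot\nu_M}\leq\abs{x}<r$ on $\operatorname{supp}\varphi$, and since $n-(n-2)=2$, one deduces $2\int_M\varphi^2\,dV\leq r\int_M\abs{H}\varphi^2\,dV+2r\int_M\abs{\varphi}\,\abs{Z}\,dV$. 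Cauchy--Schwarz together with $(a+b)^2\leq2a^2+2b^2$ then gives $\int_M\varphi^2\,dV\leq\tfrac{r^2}{2}\int_M\abs{H\varphi}^2\,dV+2r^2\int_M\abs{Z}^2\,dV$, which is equivalent to
\begin{equation*}
\int_M\abs{Z}^2\,dV\geq\frac{1}{2r^2}\int_M\varphi^2\,dV-\frac14\int_M\abs{H\varphi}^2\,dV.
\end{equation*}

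Inserting this lower bound into the identity of the first paragraph and estimating $\tfrac{n-2}{2}H(x\cdot\nu_M)\abs{x}^{-2}\leq\tfrac{n-2}{2}\abs{H}\abs{x}^{-1}$ (legitimate since $n\geq2$) produces exactly~\eqref{imprhardy_p2}. The borderline case $n=2$ needs no separate treatment: there $\omega\equiv1$, $Z=\nabla_T\varphi$, the weighted integrals and the $\abs{H}/\abs{x}$ term vanish, and the argument reduces to the divergence-theorem step alone.

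I expect the main obstacle to be the rigorous justification of the ground state substitution when $0\in M$. The integration by parts producing $\Delta_M\omega$ carries a priori a boundary contribution over a small geodesic sphere of radius~$\varepsilon$ about the origin; a size estimate shows it to be $O(\varepsilon^{n-2})$, hence negligible as $\varepsilon\to0$ when $n\geq3$, while for $n=2$ the weight~$\omega$ is constant and the difficulty disappears. One also needs $\int_M\varphi^2\abs{x}^{-2}\,dV<\infty$, which again holds for $n\geq3$ on an $n$-dimensional hypersurface. Beyond this, the remaining steps --- the explicit computation of $\Delta_M\abs{x}^{-(n-2)/2}$ and the tracking of constants --- are routine.
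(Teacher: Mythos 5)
Your proof is correct and follows essentially the same route as the paper: a ground state substitution with $\omega=\abs{x}^{-(n-2)/2}$ that keeps the quadratic remainder $\int_M\omega^2\abs{\nabla_T v}^2\,dV=\int_M\abs{Z}^2\,dV$, followed by a weighted Poincar\'e-type lower bound for that remainder in terms of $\frac{1}{2r^2}\int_M\varphi^2\,dV-\frac14\int_M\abs{H\varphi}^2\,dV$. The only cosmetic difference is that the paper obtains this lower bound by invoking Proposition~\ref{prop_poincare} (the case $p=2$, $a=n-2$ of the integration-by-parts Hardy inequality applied to $v$, plus $\abs{x}\leq r$ on the support), whereas you rederive the identical estimate by applying the divergence theorem to $\varphi^2x=v^2\abs{x}^{-(n-2)}x$ and writing it in the variables $\varphi$ and $Z$.
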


The proof of this result combines the one of Theorem~\ref{thm_hardy_gs_p2} (which uses the ground state substitution) with a Poincaré inequality in hypersurfaces of $\R^{n+1}$, stated in Proposition~\ref{prop_poincare}. The former argument brings the first mean curvature term in~\eqref{imprhardy_p2}, while the latter brings the second one.
Note that these are the same curvature terms that appear in~\eqref{hardy_car} and~\eqref{hardy_ibp_p2}.

\subsection{Structure of the paper} In Section~\ref{section_ms&a} we give a quick and easy-to-read proof of the 
Michael-Simon and Allard inequality. In Section~\ref{section_hardy} we prove the Hardy inequalities stated in 
Theorems~\ref{thm_hardy_ibp_p2} and~\ref{thm_hardy_gs_p2}. Finally, Section~\ref{section_mixed} deals with the Hardy-Sobolev inequality of Corollary~\ref{cor_ckn_p2} and the improved Hardy-Poincaré inequality of Theorem~\ref{thm_hardy+_p2}.
The appendices concern tangential derivatives and divergence theorems on hypersurfaces, as well as optimal constants in the Michael-Simon and Allard inequality.

\section{The Michael-Simon and Allard inequality}
\label{section_ms&a}

In this section we present a proof of the Michael-Simon and Allard inequality on hypersurfaces of $\R^{n+1}$ stated in Theorem~\ref{thm_MS&A}. This result is a generalization of the isoperimetric inequality on minimal surfaces of Miranda~\cite{Miranda} and it is due to Michael and Simon~\cite{MS} and independently to Allard~\cite{A}. 
Throughout the paper, $M$ is an $n$-dimensional smooth hypersurface of $\R^{n+1}$ with mean curvature $H$, 
while $E$ is a bounded subset of $M$ with $n$-dimensional Hausdorff measure $\lvert E\rvert$ and perimeter $\text{Per}(E)$. 

In the proof of Theorem~\ref{thm_MS&A}, the notions of tangential derivatives and tangential divergence are crucial. We introduce them in Definition~\ref{def_div}, following the book of Giusti~\cite{G}.
We also use the following divergence formula on $M$ --- 
see \eqref{app_divthm} in Appendix \ref{appendix_giusti} for details. If $Z$ is a \textit{tangent} vector field 
on $M$, $\Omega$ a smooth domain in~$M$, $\textnormal{div}_T Z$ the tangential divergence with respect to the hypersurface $M$, and $\nu_\Omega$ is the outer normal vector along $\partial\Omega$ to $\Omega$, then 
\begin{equation}
\label{divthm}
\int_\Omega \textnormal{div}_T Z\,dV=\int_{\partial\Omega}Z\cdot \nu_\Omega\,dA.
\end{equation}

In the proof of Theorem~\ref{thm_MS&A}, we apply \eqref{divthm} in the domain $E_\rho=E\cap B_\rho(y)$, 
where $B_\rho(y)$ is the ball of $\R^{n+1}$ with radius $\rho$ and center $y\in E$. In general, 
the boundary of $E_\rho$ is not smooth. However, applying Sard's theorem on $\partial E$ to the 
function ``distance to~$y$'' defined on $\partial E$, we deduce that almost all its values are 
regular on~$\partial E$. Now, for these regular values $\rho$, if the hypersurfaces $\partial E$ and 
$\partial B_\rho(y)$ intersect each other, then they do it transversally and, as a consequence,
the boundary of $ E_\rho $ is Lipschitz. Recall that this will happen for almost every 
$ \rho>0 $. At the same time, it is possible to state~\eqref{divthm} for a domain $ \Omega $ with 
Lipschitz boundary, approximating it with a sequence of smooth sets.

By computing the tangential divergence of the position vector $x$, we can deduce an important equality which is the starting point of the proof of Theorem~\ref{thm_MS&A}:
\begin{equation}
\label{div2}
\textnormal{div}_T x=\sum_{i=1}^{n+1}\delta_i x^i
=\sum_{i=1}^{n+1}\left(\partial_i x^i-\nu_M^i\sum_{j=1}^{n+1}(\partial_j x^i)\nu_M^j\right)
=n+1-\sum_{i=1}^{n+1}(\nu_M^i)^2=n,
\end{equation}
where $\delta_i$ for $i=1,\dots,n+1$ denote the tangential derivatives defined in Appendix~\ref{appendix_giusti}.
Before starting the proof of Theorem~\ref{thm_MS&A}, we also recall that \[ H=\text{div}_T\nu_M, \] where 
$ \nu_M $ is the normal vector to $M$ --- not to be confused with $\nu_\Omega$ in \eqref{divthm} ---, 
and that the mean curvature vector is $\mathcal{H}=H\nu_M.$

\begin{proof}[Proof of Theorem~\ref{thm_MS&A}]
	Let $y\in E$ and define $E_\rho:=E\cap B_\rho(y)$, where $B_\rho(y)$ is the ball of $\R^{n+1}$ centered at $y$ of radius $\rho>0$. We start the proof by showing the validity for almost every $ \rho>0 $ of the inequality
	\begin{equation}\label{lemma1}
	n\lvert E_\rho\rvert \leq \rho \Big(\text{Per}(E_\rho)+\int_{E_\rho}\lvert H \rvert\,dV\Big).
	\end{equation}
	To prove it, for simplicity and without loss of generality, we may take $y=0$. We denote by $\nu_{E_\rho}$ the outer normal vector along $\partial E_\rho$ to $E_\rho$. We call $ x_T $ the tangential part of the position vector $x$ with respect to the hypersurface $M$ and thus, using~\eqref{div1}, we have
	\begin{equation*}
	\begin{split}
	\textnormal{div}_T x&= \textnormal{div}_T\big(x_T+(x\cdot\nu_M)\nu_M\big)
	\\&=\textnormal{div}_T x_T + \nabla_T(x\cdot\nu_M)\cdot\nu_M + (x\cdot\nu_M) \textnormal{div}_T \nu_M
	\\&=\textnormal{div}_T x_T + (x\cdot\nu_M) H.
	\end{split}
	\end{equation*}
	Now, as pointed out after \eqref{divthm}, the boundary of $ E_\rho $ is Lipschitz for almost every 
        $ \rho>0 $. Hence, for such values of $\rho$ we can integrate  in $ E_\rho $ the last equality, 
        and using \eqref{div2} and \eqref{divthm}, we deduce
	\begin{equation*}
	\begin{split}
	n\lvert E_\rho\rvert&= \int_{E_\rho} \textnormal{div}_T x\,dV = \int_{\partial E_\rho}x_T\cdot \nu_{E_\rho}\,dA + \int_{E_\rho}(x\cdot \nu_M)H \,dV
	\\ &\leq \rho\,\text{Per}(E_\rho)+\rho\int_{E_\rho}\lvert H \rvert\,dV,
	\end{split}
	\end{equation*}
	proving \eqref{lemma1}.
	
	Back to a general point $y\in E$, for the regular values $\rho$ corresponding to the point~$y$
	--- as defined after \eqref{divthm} ---, we have that if the smooth hypersurfaces $\partial E$ and 
        $\partial B_\rho(y)$ intersect each other, then they do it transversally. As a consequence, we deduce that\footnote{ Here we use the coarea formula, which gives $\frac{d}{d\rho}|E_\rho|=\int_{\partial B_\rho(y) \cap E} |\nabla_T|x-y||^{-1} dV$.}
	\begin{equation*}
	\begin{split}
	\text{Per}(E_\rho)&=|\partial B_\rho(y) \cap E|+|\partial E \cap B_\rho(y)|\\
	&\leq \frac{d}{d\rho}|E_\rho|+|\partial E \cap B_\rho(y)|,
	\end{split}
	\end{equation*}
	where, with no risk of confussion, $|\cdot|$ refers to both the $n$ and $n-1$ dimensional Hausdorff 
	measures. This inequality and \eqref{lemma1} give
	\[
	\frac{d}{d\rho}\Big(-\rho^{-n}\lvert E_\rho\rvert\Big)\leq\rho^{-n}\Big(|\partial E \cap B_\rho(y)|+\int_{E_\rho}\lvert H\rvert\,dV\Big),
	\]
	which is equivalent to
	\begin{equation*}
	\frac{d}{d\rho}\bigg(\rho^{-n}\lvert E_\rho\rvert\,\text{exp}\int_{0}^{\rho}\frac{|\partial E \cap 
	B_\sigma(y)|+\int_{E_\sigma}\rvert H\lvert\,dV}{\lvert E_\sigma \rvert}\,d\sigma\bigg)\geq0.
	\end{equation*}
	Since this holds for almost every $\rho>0$ and the function between parentheses is continuous in $\rho$,
	it follows that it is monotone nondecreasing in $\rho$, and hence
	\begin{equation*}
	\rho^{-n}\lvert E_\rho\rvert\,\text{exp}\int_{0}^{\rho}\frac{|\partial E \cap B_\sigma(y)|+\int_{E_\sigma}\rvert H\lvert\,dV}{\lvert E_\sigma \rvert}\,d\sigma\geq\lim_{\rho\to0}\rho^{-n}\lvert E_\rho\rvert=\omega_n,
	\end{equation*}
	where $\omega_n$ is the volume of the unit ball of $\R^n$.
	
	Next, by choosing $\rho_0:=\big(2\lvert E\rvert\omega_n^{-1}\big)^\frac{1}{n}$, we deduce that
	\begin{equation*}
	\text{exp}\int_{0}^{\rho_0}\frac{|\partial E \cap B_\sigma(y)|+\int_{E_\sigma}\rvert H\lvert\,dV}{\lvert E_\sigma \rvert}\,d\sigma\geq\rho_0^n\omega_n\lvert E_{\rho_0}\rvert^{-1}\geq\rho_0^n\omega_n\lvert E\rvert^{-1}=2.
	\end{equation*}
	Therefore, for every point $y\in E$, there exists a radius $r(y)\in(0,\rho_0)$	such that
	\begin{equation*}
	\rho_0\Big(|\partial E \cap B_{r(y)}(y)|+\int_{E_{r(y)}}\lvert H\rvert\,dV\Big)\geq \lvert E_{r(y)} \rvert \log2.
	\end{equation*}
	If we substitute the chosen value for $\rho_0$, we find
	\begin{equation}
	\label{lemma2}
	\lvert E_{r(y)} \rvert \leq C\lvert E\rvert^\frac{1}{n}\left(|\partial E \cap B_{r(y)}(y)|
	+\int_{E_{r(y)}}\lvert H\rvert\,dV\right),
	\end{equation}
	for some constant $C$ depending only on the dimension $n$. Note that the first term on the right-hand
	side of this inequality is simply the measure of $\partial E$ within the ball, while the corresponding term
	in the starting inequality \eqref{lemma1} counted in addition the measure within 
	$E$ of the boundary of the ball. 
	
	Now, since $y\in E$ is arbitrary, we have that every point in the set $E$ is the center of a ball $B(y)=B_{r(y)}(y)$ for which \eqref{lemma2} holds. Since the union of these balls covers~$E$, the Besicovitch covering theorem gives the existence of a countable sub-collection of balls $\{B(y_i)\}_i$, with the same radii $r(y_i)$ as before, such that 
	\[
	E\subset \bigcup B(y_i)
	\]
	and such that every point in $E$ belongs at most to $N_n$ of the balls $B(y_i)$, where $N_n$ is a constant depending only on $n$. Combining this covering argument with~\eqref{lemma2}, we conclude~\eqref{MSineq}.
\end{proof}

Now, it is standard to deduce the Sobolev inequality of Theorem~\ref{thm_MS_functions} from the isoperimetric inequality~\eqref{MSineq}. 

\begin{proof}[Proof of Theorem~\ref{thm_MS_functions}]
	\textit{Step 1.} First, we prove that for every smooth $\varphi$ it holds that
	\begin{equation}\label{ms_functions}
	\left(\int_M\abs{\varphi}^\frac{n}{n-1}\,dV\right)^\frac{n-1}{n}\leq C\int_M\big(\abs{\nabla_T\varphi}+\abs{H\varphi}\big)\,dV,
	\end{equation}
	where $C$ is a positive constant depending only on $n$.
	
	Let $\mu$ be the measure on $M$ defined by $ d\mu= \abs{\varphi}^\frac{1}{n-1}dV$. Then, by Cavalieri's principle it holds that
	\begin{equation}
	\label{ineq_67}
	\begin{split}
	\hspace{-0.3cm}\int_M\abs{\varphi}^\frac{n}{n-1}\,dV
	&=\int_M\abs{\varphi}\,d\mu
	=\int_{0}^{+\infty}\mu\left(\{\abs{\varphi}>t\}\right)\,dt=\int_{0}^{+\infty}\int_{\{\abs{\varphi}>t\}}\abs{\varphi}^\frac{1}{n-1}\,dV\,dt 
	\\
	&\leq \int_{0}^{+\infty}\left(\int_{\{\abs{\varphi}>t\}}\abs{\varphi}^\frac{n}{n-1}\,dV\right)^\frac1n \abs{\{\abs{\varphi}>t\}}^\frac{n-1}{n}\,dt 
	\\
	&\leq \left(\int_M\abs{\varphi}^\frac{n}{n-1}\,dV\right)^\frac1n \int_{0}^{+\infty}\abs{\{\abs{\varphi}>t\}}^\frac{n-1}{n}\,dt,
	\end{split}
	\end{equation}
	where we used Hölder's inequality in the second line.
	
	From the regularity of $\varphi$ and Sard's theorem, we have that the set of singular values of $\varphi$ has zero Lebesgue measure. Considering only regular values $ t $ in the last line of~\eqref{ineq_67}, we can apply Theorem~\ref{thm_MS&A} to the set $ E= \{\abs{\varphi}>t\} $. In this way, we obtain 
	\begin{equation}\label{ineq_3}
	\begin{split}
	\left(\int_M\abs{\varphi}^\frac{n}{n-1}\,dV\right)^\frac{n-1}{n}&\leq\int_{0}^{+\infty}\abs{\{\abs{\varphi}>t\}}^\frac{n-1}{n}\,dt \\
	&\leq C\left(\int_{0}^{+\infty}\abs{\{\abs{\varphi}=t\}}\,dt+\int_{0}^{+\infty}\int_{\{\abs{\varphi}>t\}}\abs{H}\,dV\,dt
	\right).
	\end{split}
	\end{equation}
	
	Now, in the first integral in the right-hand side of \eqref{ineq_3} we use the coarea formula on manifolds --- see~\cite[Theorem VIII.3.3.]{Ch} --- to write
	\[
	\int_{0}^{+\infty}\abs{\{\abs{\varphi}=t\}}\,dt=\int_M \abs{\nabla_T \varphi}\,dV.
	\]
	Finally, plugging this identity in \eqref{ineq_3} and applying Fubini's Theorem on the last integral in \eqref{ineq_3}, we obtain~\eqref{ms_functions}.
	
	\textit{Step 2.} We can easily extend \eqref{ms_functions} to the case of an exponent $ p\in[1,n) $, proving~\eqref{ms&a}.
	In order to do this, we define $ \psi=\abs{\varphi}^{s-1}\varphi $, with $ s=p^*/1^* $, and we apply~\eqref{ms_functions} to $\psi$. We obtain
	\[
	\left(\int_M\abs{\varphi}^\frac{ns}{n-1}\,dV\right)^\frac{n-1}{n}\leq C\int_M \abs{\varphi}^{s-1}\left(s\abs{\nabla_T\varphi}+\abs{H\varphi}\right)\,dV.
	\]
	Now, exploiting that $ns/(n-1)=1^*s=p^*$, using a Hölder inequality in the right-hand side with exponents~$p$ and~$p'$, and taking into account that $(s-1)p'=p^*$, we get
	\begin{equation*}
	\hspace{-0.5cm}\bigg(\int_M\abs{\varphi}^{p^*}\,dV\bigg)^\frac{n-1}{n}
	\leq C\left(\int_M\abs{\varphi}^{p^*}\,dV\right)^\frac{p-1}{p}\left(\int_M\big(s\abs{\nabla_T\varphi}+\abs{H\varphi}\big)^p\,dV\right)^\frac1p.
	\end{equation*}
	This establishes Theorem~\ref{thm_MS_functions}.
\end{proof}
\section{Hardy inequalities on hypersurfaces}\label{section_hardy}
In this section we establish the two Hardy inequalities on hypersurfaces of~$\R^{n+1}$ stated in Theorems~\ref{thm_hardy_ibp_p2} and~\ref{thm_hardy_gs_p2}. For the first one, we also prove a general version with exponent $p\geq1$, which is stated in Theorem~\ref{thm_hardy_ibp} below.

\subsection{Hardy inequality through integration by parts}
In this subsection we prove the following Hardy inequality, which is the version of Theorem~\ref{thm_hardy_ibp_p2} for a general exponent $p\geq1$.
\begin{theorem} 
	\label{thm_hardy_ibp}
	Let $M$ be a smooth hypersurface of $\R^{n+1}$, $p\geq1$, and $ a\in[0,n) $. Then, for every $\varphi\in C^1_c(M) $ we have
	\begin{equation}\label{hardy_ibp_p}		
	\begin{split}
	&(n-a)\int_M\frac{\abs{\varphi}^{p}}{\abs{x}^a}\,dV+a\int_M\left(\frac{x}{\abs{x}}\cdot\nu_M\right)^2\frac{\abs{\varphi}^{p}}{\abs{x}^a}\,dV
	\\
	&\hspace{3cm}\leq
	\left(\int_M\frac{\abs{\varphi}^{p}}{\abs{x}^a}\,dV\right)^\frac{p-1}{p}\left( \int_M\frac{\lvert p\nabla_T\varphi- \mathcal{H}\varphi\rvert^p}{\abs{x}^{a-p}}\,dV\right)^\frac1p.
	\end{split}
	\end{equation}
\end{theorem}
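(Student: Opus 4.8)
The plan is to integrate by parts on $M$ the weighted position vector field
\begin{equation*}
Y:=\frac{\abs{\varphi}^{p}}{\abs{x}^{a}}\,x ,
\end{equation*}
which is compactly supported on $M$ (up to the behaviour near the origin, addressed below). Since $Y$ is in general not tangent to $M$, I would apply the divergence theorem \eqref{divthm} to its tangential part $Y_T$ and combine it with the decomposition $\textnormal{div}_{T}Y=\textnormal{div}_{T}Y_T+(Y\cdot\nu_M)H$ — obtained exactly as in the proof of Theorem~\ref{thm_MS&A}, using $\textnormal{div}_{T}\nu_M=H$ — to get
\begin{equation*}
\int_{M}\textnormal{div}_{T}Y\,dV=\int_{M}(Y\cdot\nu_M)\,H\,dV=\int_{M}\frac{\abs{\varphi}^{p}}{\abs{x}^{a}}\,(x\cdot\nu_M)\,H\,dV .
\end{equation*}

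Next I would compute $\textnormal{div}_{T}Y$ via the Leibniz rule for the tangential derivatives $\delta_i$ of Appendix~\ref{appendix_giusti} (valid for scalar times non-tangent vector field since each $\delta_i$ is a derivation), using $\textnormal{div}_{T}x=n$ from \eqref{div2}, $\nabla_{T}\abs{x}=x_T/\abs{x}$, and $x_T\cdot x=\abs{x_T}^{2}=\abs{x}^{2}-(x\cdot\nu_M)^{2}$. A short computation yields
\begin{equation*}
\textnormal{div}_{T}Y=(n-a)\frac{\abs{\varphi}^{p}}{\abs{x}^{a}}+a\left(\frac{x}{\abs{x}}\cdot\nu_M\right)^{2}\frac{\abs{\varphi}^{p}}{\abs{x}^{a}}+p\,\frac{\abs{\varphi}^{p-2}\varphi\,(\nabla_{T}\varphi\cdot x)}{\abs{x}^{a}} .
\end{equation*}
Integrating this identity, substituting the divergence formula above, and recalling $\mathcal H=H\nu_M$ (so that $(x\cdot\nu_M)H=x\cdot\mathcal H$) and $\abs{\varphi}^{p-2}\varphi=\abs{\varphi}^{p-1}\textnormal{sgn}(\varphi)$, one finds that the left-hand side of \eqref{hardy_ibp_p} equals
\begin{equation*}
-\int_{M}\frac{\abs{\varphi}^{p-1}\textnormal{sgn}(\varphi)}{\abs{x}^{a}}\;x\cdot\big(p\,\nabla_{T}\varphi-\mathcal H\varphi\big)\,dV .
\end{equation*}

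To conclude I would bound $\abs{x\cdot(p\nabla_{T}\varphi-\mathcal H\varphi)}\le\abs{x}\,\abs{p\nabla_{T}\varphi-\mathcal H\varphi}$ and, when $p>1$, apply Hölder's inequality with exponents $p/(p-1)$ and $p$, splitting the weight as $\abs{x}^{1-a}=\abs{x}^{-a(p-1)/p}\cdot\abs{x}^{(p-a)/p}$: raising the first group to the power $p/(p-1)$ produces $\abs{\varphi}^{p}\abs{x}^{-a}$, and raising the second to the power $p$ produces $\abs{p\nabla_{T}\varphi-\mathcal H\varphi}^{p}\abs{x}^{p-a}$, i.e.\ exactly the integrand on the right-hand side of \eqref{hardy_ibp_p}. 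For $p=1$ no Hölder step is needed, the previous bound being already the claimed inequality (with the convention that the first factor on the right of \eqref{hardy_ibp_p} equals $1$).

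Two technical points will need care, though neither is a genuine obstacle. First, $t\mapsto\abs{t}^{p}$ is only Lipschitz, not $C^{1}$, at the origin when $1\le p<2$, so I would first run the argument with $\abs{\varphi}$ replaced by $(\varphi^{2}+\e^{2})^{1/2}$, which makes $Y$ of class $C^{1}$, and then let $\e\to0$, all integrals converging by dominated convergence since $\varphi/(\varphi^{2}+\e^{2})^{1/2}$ is bounded. Second, if $0\in M$ the weight $\abs{x}^{-a}$ is singular there; I would remove a ball $B_{\e}(0)\subset\R^{n+1}$, note that $\textnormal{div}_{T}Y\in L^{1}(M)$ near the origin because $a<n$, and check that the extra boundary integral over $M\cap\partial B_{\e}(0)$ is $O(\e^{n-a})\to0$, again because $a<n$. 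The essential content is thus the one-line identity for $\textnormal{div}_{T}Y$; the remaining steps are routine bookkeeping.
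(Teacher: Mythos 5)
Your proposal is correct and follows essentially the same route as the paper: both integrate the identity $\textnormal{div}_T x=n$ against the weight $\abs{\varphi}^p\abs{x}^{-a}$ (you compute $\textnormal{div}_T\big(\abs{\varphi}^p\abs{x}^{-a}x\big)$ directly and invoke the divergence theorem, while the paper applies the ready-made integration-by-parts formula \eqref{ibp_vector} with $v=\abs{\varphi}^p\abs{x}^{-a}$ and $Z=x$ — the same computation rearranged), arrive at the same intermediate identity for the left-hand side, and conclude with the identical Cauchy--Schwarz plus Hölder step with the same splitting of the weight. Your two technical remarks (regularizing $\abs{\varphi}^p$ for $1\le p<2$ and handling a possible singularity at $0\in M$) are points the paper passes over silently, and your treatment of them is sound.
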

By throwing the second term in the left-hand side of~\eqref{hardy_ibp_p} and taking $p=a<n$,
we deduce that the Hardy inequality in its Euclidean form and with its best constant, 
	\begin{equation}\label{p_hardy_flat}
	\frac{(n-p)^p}{p^p}\int_{M}\frac{\abs{\varphi}^p}{\abs{x}^p}\,dV
	\leq
	\int_{M} \abs{\nabla\varphi}^p\,dV,
	\end{equation}
	holds on every minimal hypersurface $M$ for all $p\in[1,n)$. As mentioned in our comments 
	following~\eqref{carron}, this inequality also follows from a result in~\cite{KO1}.

	We recall that, when $M=\R^n$, for $1<p<n$ the optimal constant in \eqref{p_hardy_flat} is not 
	achieved by any function 
	in the homogeneous Sobolev space $\dot{W}^{1,p}(\R^n)$ --- the completion of $C^1_c(\R^n)$ 
	with respect to the 
	right-hand side of \eqref{p_hardy_flat}; see \cite{FS}. On the contrary, if $p=1$, every 
	radially symmetric decreasing function realizes the equality in \eqref{p_hardy_flat}
	--- as it can be checked using the coarea formula, the layer cake 
	representation for the function $\varphi$, and the fact that ${\rm div }(x/|x|)=(n-1)/|x|$.

\begin{proof}[Proof of Theorem~\ref{thm_hardy_ibp}.]
	Using formula \eqref{div2} for the tangential divergence of the position vector $x$, and then integrating by parts according to~\eqref{ibp_vector}, we can write
	\begin{equation*}
	\begin{split}
	&n\int_M\frac{\abs{\varphi}^p}{\abs{x}^a}\,dV=\int_M\frac{\abs{\varphi}^p}{\abs{x}^a}\,\textnormal{div}_Tx\,dV 
	\\
	&\hspace{1.2cm}=-\int_M\left(p\frac{\abs{\varphi}^{p-2}\varphi}{\abs{x}^a}\nabla_T\varphi\cdot x+ \abs{\varphi}^px\cdot\nabla_T\abs{x}^{-a} -	\frac{\abs{\varphi}^p}{\abs{x}^a}\mathcal{H}\cdot x\right)\,dV.
	\end{split}
	\end{equation*}
	Now, recalling that the tangential part of the position vector $x$ is $x_T=x-(x\cdot\nu_M)\nu_M$, 
	we compute
	\begin{equation*}
	\begin{split}
	x\cdot\nabla_T\abs{x}^{-a}
	=-a\abs{x}^{-a-2}x\cdot x_T
	=-a\abs{x}^{-a}+a\left(\frac{x}{\abs{x}}\cdot\nu_M\right)^2\abs{x}^{-a}.
	\end{split}
	\end{equation*}
	Hence, we have
	\begin{equation}\label{ineq_89}
	\begin{split}
	&(n-a)\int_M\frac{\abs{\varphi}^p}{\abs{x}^a}\,dV+a\int_M\left(\frac{x}{\abs{x}}\cdot\nu_M\right)^2\frac{\abs{\varphi}^p}{\abs{x}^a}\,dV
	\\
	&\hspace{3.5cm}=-\int_M\frac{\abs{\varphi}^{p-2}\varphi}{\abs{x}^{a-1}}\left(p\nabla_T\varphi\cdot \frac{x}{\abs{x}} -\varphi\mathcal{H}\cdot \frac{x}{\abs{x}}\right)\,dV
	\\
	&\hspace{3.5cm}
	\leq
	\int_M\frac{\abs{\varphi}^{p-1}}{\abs{x}^{a-1}}\abs{p\nabla_T\varphi-\mathcal{H}\varphi}\,dV.
	\end{split}
	\end{equation}
	
	Finally, we apply Hölder's inequality with exponents $ p $ and $ p' $ to the last integral in~\eqref{ineq_89}, obtaining
	\begin{equation*}
	\begin{split}
	&\int_M\frac{\abs{\varphi}^{p-1}}{\abs{x}^{a-1}}\abs{p\nabla_T\varphi-\mathcal{H}\varphi}\,dV=\int_M\frac{\abs{\varphi}^{p-1}}{\abs{x}^{a(p-1)/p}}\frac{\abs{p\nabla_T\varphi-\mathcal{H}\varphi}}{\abs{x}^{(a-p)/p}}\,dV
	\\
	&\hspace{4cm}\leq\left(\int_M\frac{\abs{\varphi}^p}{\abs{x}^a}\,dV\right)^\frac{p-1}{p}
	\left(\int_M\frac{\abs{p\nabla_T\varphi-\mathcal{H}\varphi}^p}{\abs{x}^{a-p}}\,dV	\right)^\frac1p.
	\end{split}
	\end{equation*}
	Plugging this bound in \eqref{ineq_89}, we obtain \eqref{hardy_ibp_p} and finish the proof of Theorem~\ref{thm_hardy_ibp}.
\end{proof}

When $p=2$ and $n\geq3$, we exploit a nice simplification in \eqref{hardy_ibp_p} and prove Theorem~\ref{thm_hardy_ibp_p2}.

\begin{proof}[Proof of Theorem~\ref{thm_hardy_ibp_p2}.]
We use \eqref{hardy_ibp_p} with $ p=2 $. Then, since the vectors $\nabla_T\varphi$ and $\mathcal{H}$ are orthogonal, we have
	\[
	\abs{2\nabla_T\varphi-\mathcal{H}\varphi}^2=4\abs{\nabla_T \varphi}^2+\abs{\mathcal{H}\varphi}^2
	\]
	and Theorem~\ref{thm_hardy_ibp_p2} follows directly from Theorem~\ref{thm_hardy_ibp}.
\end{proof}

From Theorem~\ref{thm_hardy_ibp} we deduce a version with exponent $p$ for the foliated Hardy inequality~\eqref{hardy_cabre} that the first author established for $p=2$ in~\cite{C1}. 
In the statement, we use the following notation for the radial derivative:
\[
u_{r}=\nabla u \cdot \frac{x}{\abs{x}}.
\]
Recall that the mean curvature $\mathcal{H}$ and the tangential gradient $\nabla_T$ refer to the level sets of 
the function $u$. The result is the following.

\begin{corollary}\label{cor_hardy}
	Let $\Omega$ be a smooth bounded domain of $\R^{n+1} $, $u$ a $C^\infty(\overline{\Omega})$ function, $p\geq1$, and $a\in[0,n)$. Then, for every $\varphi\in C^1_c(\Omega) $ we have
	\begin{equation}\label{hardy_levelsets}
	\begin{split}
	&(n-a)\int_{\Omega}\abs{\nabla u}\frac{\abs{\varphi}^p}{\abs{x}^a}\,dx
	+a\int_{\Omega}\frac{u_{r}^2}{\abs{\nabla u}}\,\frac{\abs{\varphi}^p}{\abs{x}^a}\,dx
	\\
	&\hspace{2.3cm}\leq
	\left(\int_{\Omega}\abs{\nabla u}\frac{\abs{\varphi}^p}{\abs{x}^a}\,dx\right)^\frac{p-1}{p}
	\left(\int_{\Omega}\abs{\nabla u}\frac{\lvert p\nabla_T \varphi-\mathcal{H}\varphi\rvert^p}{\abs{x}^{a-p}}\,dx\right)^\frac1p.
	\end{split}
	\end{equation}
\end{corollary}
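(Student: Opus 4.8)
The idea is to deduce \eqref{hardy_levelsets} from \eqref{hardy_ibp_p} by integrating over the level sets of $u$ and using the coarea formula. Since $u\in C^\infty(\overline\Omega)$, Sard's theorem guarantees that for almost every $t$ the level set $M_t:=\{u=t\}\cap\Omega$ is a smooth hypersurface (possibly empty); on the open set $\{\nabla u\neq 0\}$ the function $u$ is a submersion, and the level sets $M_t$ are genuinely smooth there. For such regular values $t$, I would apply Theorem~\ref{thm_hardy_ibp} on $M_t$ to the test function $\varphi$ restricted to $M_t$, with $\nabla_T$ and $\mathcal H$ the tangential gradient and mean curvature vector of $M_t$. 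Note that $\varphi|_{M_t}\in C^1_c(M_t)$ because $\varphi\in C^1_c(\Omega)$, so the hypotheses of Theorem~\ref{thm_hardy_ibp} are met.

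\textbf{Key steps.} First, write \eqref{hardy_ibp_p} on $M_t$ for a.e. $t$: this gives, with all integrals over $M_t$ against the surface measure $dV$,
\begin{equation*}
(n-a)\int_{M_t}\frac{|\varphi|^p}{|x|^a}\,dV+a\int_{M_t}\Bigl(\frac{x}{|x|}\cdot\nu_{M_t}\Bigr)^2\frac{|\varphi|^p}{|x|^a}\,dV
\leq\Bigl(\int_{M_t}\frac{|\varphi|^p}{|x|^a}\,dV\Bigr)^{\frac{p-1}{p}}\Bigl(\int_{M_t}\frac{|p\nabla_T\varphi-\mathcal H\varphi|^p}{|x|^{a-p}}\,dV\Bigr)^{\frac1p}.
\end{equation*}
Second, multiply both sides by the appropriate power and integrate in $t\in\R$, then invoke the coarea formula $\int_{\R}\bigl(\int_{M_t} g\,dV\bigr)dt=\int_\Omega g\,|\nabla u|\,dx$ for the relevant integrands $g$. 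Applied to the left-hand side this turns $\int_{M_t}\tfrac{|\varphi|^p}{|x|^a}\,dV$ into $\int_\Omega |\nabla u|\tfrac{|\varphi|^p}{|x|^a}\,dx$; and for the second term one uses that on $M_t$ the unit normal is $\nu_{M_t}=\nabla u/|\nabla u|$, so $\bigl(\tfrac{x}{|x|}\cdot\nu_{M_t}\bigr)^2=u_r^2/|\nabla u|^2$, and the coarea formula produces exactly $\int_\Omega \tfrac{u_r^2}{|\nabla u|}\tfrac{|\varphi|^p}{|x|^a}\,dx$. Third, for the right-hand side one cannot just integrate a product of $t$-integrals; instead apply H\"older's inequality in the variable $t$ with exponents $p'=p/(p-1)$ and $p$ to the product
\begin{equation*}
\int_{\R}\Bigl(\int_{M_t}\frac{|\varphi|^p}{|x|^a}\,dV\Bigr)^{\frac{p-1}{p}}\Bigl(\int_{M_t}\frac{|p\nabla_T\varphi-\mathcal H\varphi|^p}{|x|^{a-p}}\,dV\Bigr)^{\frac1p}dt
\leq\Bigl(\int_{\R}\int_{M_t}\frac{|\varphi|^p}{|x|^a}\,dV\,dt\Bigr)^{\frac{p-1}{p}}\Bigl(\int_{\R}\int_{M_t}\frac{|p\nabla_T\varphi-\mathcal H\varphi|^p}{|x|^{a-p}}\,dV\,dt\Bigr)^{\frac1p},
\end{equation*}
and then apply the coarea formula to each of the two resulting double integrals, obtaining the claimed right-hand side of \eqref{hardy_levelsets}.

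\textbf{Main obstacle.} The delicate point is handling the set $Z:=\{\nabla u=0\}$, where the level sets degenerate and $\nu_{M_t}$, $\nabla_T\varphi$, $\mathcal H$ are not defined. This is harmless because the coarea formula already weights every integrand by $|\nabla u|$, which vanishes on $Z$; thus $Z$ contributes nothing to any side of the inequality, and one may freely restrict all computations to the open set $\{\nabla u\neq 0\}\cap\Omega$, where $u$ is a submersion and the geometry of the foliation is smooth. One should also note that by Sard's theorem the critical values of $u$ form a null set in $\R$, so applying Theorem~\ref{thm_hardy_ibp} "for a.e. $t$" is legitimate and suffices for the coarea integration. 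A minor technical remark: the hypersurfaces $M_t$ need not have compact closure in $\{u=t\}$, but $\varphi$ has compact support in $\Omega$, so $\varphi|_{M_t}$ has compact support in $M_t$ and Theorem~\ref{thm_hardy_ibp} applies verbatim. With these observations the argument is routine.
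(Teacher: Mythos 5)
Your proposal is correct and follows essentially the same route as the paper: apply Theorem~\ref{thm_hardy_ibp} on the level sets $\{u=t\}$ for almost every $t$ (Sard), use $\nu_M=\nabla u/\abs{\nabla u}$ to identify the second left-hand term, integrate in $t$ with H\"older's inequality in the $t$-variable on the right-hand side, and convert back via the coarea formula. Your additional remarks on the critical set $\{\nabla u=0\}$ and on the compact support of $\varphi|_{M_t}$ are sound and only make explicit points the paper leaves implicit.
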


\begin{proof}
	Using the coarea formula in Euclidean space for the two integrals in the left-hand side of \eqref{hardy_levelsets}, we see that
	\begin{equation}\label{3_67}
	\begin{split}
	&(n-a)\int_{\Omega}\abs{\nabla u}\frac{\abs{\varphi}^p}{\abs{x}^a}\,dx
	+a\int_{\Omega}\frac{u_{r}^2}{\abs{\nabla u}}\,\frac{\abs{\varphi}^p}{\abs{x}^a}\,dx
	\\
	&\hspace{1cm}=(n-a)\int_{\R}\int_{\{u=t\}}\frac{\abs{\varphi}^p}{\abs{x}^a}\,dV\,dt
	+a\int_{\R}\int_{\{u=t\}}
	\left(\frac{x}{\abs{x}}\cdot\frac{\nabla u}{\abs{\nabla u}}
	\right)^2\frac{\abs{\varphi}^p}{\abs{x}^a}\,dV\,dt.
	\end{split}
	\end{equation}
	
	Now, by Sard's theorem, $\{u=t\}$ is a smooth hypersurface of~$\R^{n+1}$ for almost every $t\in\R$, and the normal vector $\nu_M$ of $ M=\{u=t\} $ is
	\[
	\nu_M=\frac{\nabla u}{\abs{\nabla u}}.
	\]
	Therefore, we can apply \eqref{hardy_ibp_p} to the function~$\varphi$ on each smooth hypersurface $M=\{u=t\}$ and then integrate in $dt$, obtaining
	\begin{equation*}
	\begin{split}
	&(n-a)\int_{\R}\int_{\{u=t\}}\frac{\abs{\varphi}^p}{\abs{x}^a}\,dV\,dt+a\int_{\R}\int_{\{u=t\}}\left(\frac{x}{\abs{x}}\cdot\frac{\nabla u}{\abs{\nabla u}}
	\right)^2\frac{\abs{\varphi}^p}{\abs{x}^a}\,dV\,dt
	\\
	&\hspace{1.5cm}\leq\left(\int_{\R}\int_{\{u=t\}}\frac{\abs{\varphi}^p}{\abs{x}^a}\,dV\,dt\right)^\frac{p-1}{p}
	\left(\int_{\R}\int_{\{u=t\}}\frac{\lvert p\nabla_T\varphi-\mathcal{H}\varphi\rvert^p}{\abs{x}^{a-p}}\,dV\,dt\right)^\frac1p,
	\end{split}
	\end{equation*}
	where we have used Hölder's inequality for an integral in $dt$.
	Finally, using again the coarea formula and combining this inequality with \eqref{3_67}, we deduce \eqref{hardy_levelsets}.
\end{proof}

\subsection{Hardy inequality through a ground state substitution}
In this subsection we prove Theorem~\ref{thm_hardy_gs_p2} using a method known as the ground state substitution. 
Within the proof we will need that
	\begin{equation}\label{divT}
	\begin{split}
	\textnormal{div}_T x_T &={\rm div}_T\big(x-(x\cdot\nu_M)\nu_M\big)
	= n-(x\cdot\nu_M){\rm div}_T\nu_M - \big(\nabla_T (x\cdot\nu_M)\big)\cdot \nu_M
	\\&= n-(x\cdot\nu_M)H,
	\end{split}
	\end{equation}
	where we have used that $\textnormal{div}_Tx=n$, by \eqref{div2}, and that $\textnormal{div}_T\nu_M=H$.

	It is now easy to deduce the inequality 
	$$
	\Delta |x|\geq \frac{n-1}{|x|} - \left( \frac{x}{|x|}\cdot \nu_M\right) H
	$$
	for the Laplace-Beltrami operator on $M$ --- a result mentioned in the Introduction within the 
	context of minimal hypersurfaces. Indeed, we have
	\begin{equation*}
	\begin{split}
	\Delta |x| &={\rm div}_T \nabla_T |x|={\rm div}_T (x_T/|x|) 
	=({\rm div}_T x_T)/|x|+x_T\cdot\nabla_T |x|^{-1}
	\\&= \left( n-(x\cdot\nu_M)H\right) /|x|-|x|^{-3}|x_T|^2 
	\\&\geq (n-1)/|x| -(x\cdot\nu_M)H /|x|, 
	\end{split}
	\end{equation*}
	as claimed.

\begin{proof}[Proof of Theorem~\ref{thm_hardy_gs_p2}]
	We substitute $\varphi(x)=\omega(x) v(x)$, with $ \omega(x)=\lvert x\rvert^{-\frac{n-2}{2}}$ and $v\in C^1_c(M)$, in the gradient term
	\begin{equation}
	\label{gs0}
	\int_M \lvert\nabla_T\varphi\rvert^2\,dV=\int_M\lvert v\nabla_T\omega+\omega\nabla_Tv\rvert^2\,dV.
	\end{equation}
	Applying the convexity inequality $\abs{a+b}^2\geq\abs{a}^2+2\,a\cdot b$, valid for all vectors $ a,b\in \R^n $, we obtain
	\begin{equation*}	
	\int_M \lvert\nabla_T\varphi\rvert^2\,dV\geq\int_M v^2\abs{\nabla_T\omega}^2 \,dV
	+\int_M \omega\,\nabla_T\omega \cdot\nabla_T\left(v^2\right)\,dV.
	\end{equation*}
	Using the formula of integration by parts \eqref{ibp_vector}, we get
	\begin{equation}
	\label{gs3}
	\begin{split}
	&\hspace{-1cm}\int_M \lvert\nabla_T\varphi\rvert^2\,dV\geq 
	\int_M v^2\abs{\nabla_T\omega}^2\,dV-\int_M v^2\textnormal{div}_T\left(\omega\nabla_T\omega\right)\,dV
	\\
	&\hspace{6.5cm}+\int_M\omega\,v^2\nabla_T\omega\cdot\mathcal{H}\,dV.
	\end{split}
	\end{equation}
	Since $ \nabla_T\omega $ is a tangent vector and the mean curvature vector $ \mathcal{H} $ is normal to~$M$, the last term in \eqref{gs3} vanishes. Exploiting an additional cancellation after developing the divergence in \eqref{gs3}, we have
	\begin{equation}
	\label{gs1}
	\int_M \lvert\nabla_T\varphi\rvert^2\,dV\geq-\int_M \omega\,v^2\textnormal{div}_T\left(\nabla_T\omega\right)\,dV.
	\end{equation}
	
	Next, we compute the tangential divergence of the vector field $ \nabla_T\omega $, where $ \omega(x)=\lvert x\rvert^\alpha$ with $\alpha=-(n-2)/2$. The tangential gradient of $\omega$ is
	\[\nabla_T\omega=\alpha\abs{x}^{\alpha-2}x_T
	=\alpha\abs{x}^{\alpha-2}\big(x-(x\cdot\nu_M)\nu_M\big). \]
	Hence, using \eqref{divT}, we have
	\begin{equation*}
	\begin{split}
	-\textnormal{div}_T\left(\nabla_T\omega\right)&=-\alpha\,\textnormal{div}_T\left(\abs{x}^{\alpha-2}\big(x-(x\cdot\nu_M)\nu_M\big)\right)
	\\&=-\alpha\abs{x}^{\alpha-2}\left(n-x\cdot\nu_MH\right)-\alpha(\alpha-2)\abs{x_T}^{2}\abs{x}^{\alpha-4}.
	\end{split}
	\end{equation*}
	We plug this into \eqref{gs1}, recalling that $\omega(x)=\abs{x}^\alpha$, and obtain
	\begin{equation}\label{gs4}
	\begin{split}
	&\hspace{-0.7cm}\int_M \lvert\nabla_T\varphi\rvert^2\,dV\geq
	\alpha\int_M \abs{x}^{2\alpha-2}v^2
	x\cdot\mathcal{H}\,dV
	\\ 
	&\hspace{1cm}-n\alpha\int_M\abs{x}^{2\alpha-2}v^{2}\,dV
	-\alpha(\alpha-2)\int_M\abs{x_T}^{2}\abs{x}^{2\alpha-4}v^{2}\,dV.
	\end{split}
	\end{equation}
	
	Now we move the first integral in the right-hand side of \eqref{gs4} to the left-hand side of the inequality, and observe that
	$\abs{x}^{2\alpha-2}v^2=\varphi^2/\abs{x}^2$.
	Therefore, \eqref{gs4} reads
	\begin{equation*}
	\begin{split}
	&\hspace{-1cm}\int_M\left(\abs{\nabla_T\varphi}^2+\frac{n-2}{2}\,\frac{\varphi^2}{\abs{x}^{2}}\,x\cdot\mathcal{H}\right)\,dV
	\\ 
	&\hspace{1.25cm}\geq-n\alpha\int_M\frac{\varphi^2}{\abs{x}^{2}}\,dV
	-\alpha(\alpha-2)\int_M\frac{\abs{x_T}^{2}}{\abs{x}^{2}}\,\frac{\varphi^2}{\abs{x}^{2}}\,dV.
	\end{split}
	\end{equation*}
	In the last integral, we have $\abs{x_T}^2=\abs{x}^2-(x\cdot\nu_M)^2$
	and thus the inequality becomes
	\begin{equation*}
	\begin{split}
	&\hspace{-0.5cm}\int_M\left(\abs{\nabla_T\varphi}^2+\frac{n-2}{2}\,\frac{\varphi^2}{\abs{x}^{2}}\,x\cdot\mathcal{H}\right)\,dV
	\\ 
	&\hspace{1cm}\geq-\alpha(n+\alpha-2)\int_M\frac{\varphi^2}{\abs{x}^{2}}\,dV
	+\alpha(\alpha-2)\int_M\left(\frac{x}{\abs{x}}\cdot\nu_M\right)^2\frac{\varphi^2}{\abs{x}^{2}}\,dV.
	\end{split}
	\end{equation*}
	
	Finally, since $-\alpha\left(n+\alpha-2\right)=(n-2)^2/4$ and $\alpha(\alpha-2)=(n^2-4)/4$, we conclude~\eqref{hardy_car}.
\end{proof}

\section{Hardy-Sobolev and Hardy-Poincaré inequalities on hypersurfaces}\label{section_mixed}
In this section we prove the Hardy-Sobolev inequality stated in Corollary~\ref{cor_ckn_p2} and the Hardy-Poincaré inequality of Theorem~\ref{thm_hardy+_p2}.

We start from the Hardy-Sobolev inequality on hypersurfaces, that we obtain as an interpolation of the Michael-Simon and Allard inequality and the Hardy inequality of Theorem~\ref{thm_hardy_ibp}. We state and prove here our result for a general power $p\in[1,n)$.

\begin{corollary}\label{cor_ckn}
	Let $M$ be a smooth hypersurface of $\R^{n+1}$, $ p\in[1,n) $, and $ b\in[0,1] $. Then, for every $ \varphi\in C^1_c(M) $ we have
	\begin{equation}\label{h_s}
	\left(\int_M\frac{\abs{\varphi}^{p\frac{n-bp}{n-p}}}{\abs{x}^{bp}}\,dV\right)^\frac{n-p}{n-bp}\leq C\int_M\big(\abs{\nabla_T\varphi}^p+\abs{H\varphi}^p\big)\,dV,
	\end{equation}
	for some positive constant $C$ depending only on $n$ and $p$.
\end{corollary}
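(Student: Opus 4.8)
The plan is to derive \eqref{h_s} by interpolating between two inequalities already at our disposal: the Michael--Simon and Allard inequality \eqref{ms&a} of Theorem~\ref{thm_MS_functions}, which is exactly the endpoint $b=0$, and a weighted Hardy inequality, which will be the endpoint $b=1$. To get the latter I would use \eqref{hardy_ibp_p} of Theorem~\ref{thm_hardy_ibp} with the admissible choice $a=p\in[1,n)$: discarding the nonnegative second term on its left-hand side and dividing by $\big(\int_M|\varphi|^p|x|^{-p}\,dV\big)^{(p-1)/p}$ --- the case in which this integral vanishes being trivial, and its finiteness being automatic since $p<n$ while $M$ is $n$-dimensional, so that $|x|^{-p}$ is integrable near the origin on $M$ --- one arrives at
\[
(n-p)^p\int_M\frac{|\varphi|^p}{|x|^p}\,dV\le\int_M|p\nabla_T\varphi-\mathcal{H}\varphi|^p\,dV .
\]
Since $|\mathcal{H}|=|H|$, one has $|p\nabla_T\varphi-\mathcal{H}\varphi|^p\le C\big(|\nabla_T\varphi|^p+|H\varphi|^p\big)$, so that, writing $I:=\int_M\big(|\nabla_T\varphi|^p+|H\varphi|^p\big)\,dV$ for the right-hand side of \eqref{h_s}, we have at our disposal both
\[
\int_M\frac{|\varphi|^p}{|x|^p}\,dV\le C\,I
\qquad\text{and}\qquad
\left(\int_M|\varphi|^{p^*}\,dV\right)^{\frac{n-p}{n}}\le C\,I ,
\]
where $p^*=np/(n-p)$ and the second inequality is just \eqref{ms&a} rewritten using $p/p^*=(n-p)/n$.

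Next I would dispose of the two endpoints directly. When $b=0$ the exponent $p(n-bp)/(n-p)$ equals $p^*$, the weight disappears, and \eqref{h_s} is precisely \eqref{ms&a}; when $b=1$ the exponent equals $p$, the weight is $|x|^{-p}$, and \eqref{h_s} reduces to $\int_M|\varphi|^p|x|^{-p}\,dV\le C\,I$, already obtained above. So it remains to treat $b\in(0,1)$.

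For the interpolation I would first record the elementary identity $p\frac{n-bp}{n-p}-bp=(1-b)p^*$, which lets one split the integrand and apply Hölder's inequality with the conjugate exponents $1/b$ and $1/(1-b)$:
\begin{multline*}
\int_M\frac{|\varphi|^{p(n-bp)/(n-p)}}{|x|^{bp}}\,dV
=\int_M\left(\frac{|\varphi|^p}{|x|^p}\right)^{b}|\varphi|^{(1-b)p^*}\,dV\\
\le\left(\int_M\frac{|\varphi|^p}{|x|^p}\,dV\right)^{b}\left(\int_M|\varphi|^{p^*}\,dV\right)^{1-b}.
\end{multline*}
Raising this to the power $(n-p)/(n-bp)$ and inserting the two bounds in terms of $I$ above produces the product $(C\,I)^{b\frac{n-p}{n-bp}}(C\,I)^{\frac{n(1-b)}{n-bp}}$; since $b\frac{n-p}{n-bp}+\frac{n(1-b)}{n-bp}=1$, this equals $C\,I$, which is exactly \eqref{h_s}, with a constant depending only on $n$ and $p$.

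I do not foresee a genuine obstacle: the argument is a plain interpolation between two inequalities we have already proved, and the only points requiring a little care are the exponent bookkeeping --- checking $p(n-bp)/(n-p)-bp=(1-b)p^*$ and that the two powers of $C\,I$ sum to $1$ --- together with the harmless remark that $\int_M|\varphi|^p|x|^{-p}\,dV$ is finite. The very same scheme, now interpolating \eqref{ms&a} at $p=2$ with Theorem~\ref{thm_hardy_ibp_p2} taken at $a=2$, yields Corollary~\ref{cor_ckn_p2}.
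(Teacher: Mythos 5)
Your proposal is correct and follows essentially the same route as the paper: the endpoint Hardy inequality obtained from Theorem~\ref{thm_hardy_ibp} with $a=p$ (discarding the nonnegative term and absorbing the factor $\bigl(\int_M|\varphi|^p|x|^{-p}\,dV\bigr)^{(p-1)/p}$), combined with the Michael--Simon and Allard inequality via H\"older with exponents $1/b$ and $1/(1-b)$, and the same exponent bookkeeping. The only differences are cosmetic, e.g.\ your explicit remarks on the endpoint cases and on the finiteness of the weighted integral.
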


\begin{proof} 
	First, from \eqref{hardy_ibp_p} with $a=p$ it follows that
	\begin{equation*}		
	\begin{split}
	(n-p)\int_M\frac{\abs{\varphi}^{p}}{\abs{x}^p}\,dV&\leq(n-p)\int_M\frac{\abs{\varphi}^{p}}{\abs{x}^p}\,dV+p\int_M\left(\frac{x}{\abs{x}}\cdot\nu_M\right)^2\,\frac{\abs{\varphi}^{p}}{\abs{x}^p}\,dV
	\\
	&\leq
	\left(\int_M\frac{\abs{\varphi}^{p}}{\abs{x}^p}\,dV\right)^\frac{p-1}{p}\left( \int_M\lvert p\nabla_T\varphi- \mathcal{H}\varphi\rvert^p\,dV\right)^\frac1p.
	\end{split}
	\end{equation*}
	Raising the inequality to the power $p$ and using the convexity inequality $\abs{a+b}^p\leq 2^{p-1}\left(\abs{a}^p+\abs{b}^p\right)$,
	we obtain
	\begin{equation}\label{basic_hardy}
	\begin{split}
	(n-p)^p\int_M\frac{\abs{\varphi}^{p}}{\abs{x}^p}\,dV
	&\leq
	\int_M \abs{p\nabla_T\varphi-\mathcal{H}\varphi}^p\,dV
	\\
	&\leq
	2^{p-1}\int_M \left(p^p\abs{\nabla_T\varphi}^p+\abs{H\varphi}^p\right)\,dV.
	\end{split}
	\end{equation}
	
	Observe that, if $b=0$ or $b=1$, then \eqref{h_s} follows respectively from the Michael-Simon and Allard inequality~\eqref{ms&a} or from the Hardy inequality~\eqref{basic_hardy}. Thus, we can assume $b\in(0,1)$ in the rest of the proof.
	
	Now, we consider the integral in the left-hand side of \eqref{h_s}. Using Hölder's inequality with exponents $ 1/b $ and $ 1/(1-b) $, the Hardy inequality~\eqref{basic_hardy}, and Theorem~\ref{thm_MS_functions}, we get
	\begin{equation*}
	\begin{split}
	\int_M\frac{\abs{\varphi}^{p\frac{n-bp}{n-p}}}{\abs{x}^{bp}}\,dV&=\int_M\left(\frac{\abs{\varphi}}{\abs{x}}\right)^{bp}\abs{\varphi}^{(1-b)\frac{np}{n-p}}\,dV
	\\
	&\leq\left(\int_M\frac{\abs{\varphi}^{p}}{\abs{x}^{p}}\,dV\right)^b
	\left(\int_M\abs{\varphi}^{p^*}dV\right)^{1-b}
	\\
	&\leq C\left(\int_M\big(\abs{\nabla_T\varphi}^p+\abs{H\varphi}^p\big)\,dV\right)^\beta,
	\end{split}
	\end{equation*}
	where $C$ is a positive constant depending only on $n$ and $p$, while $\beta$ is
	\[\beta=b+\frac{(1-b)p^*}{p}=\frac{n-bp}{n-p}.\]
	Finally, raising the inequality to the power $1/\beta$, \eqref{h_s} is established. Observe that, since $\beta>1$, $C^{1/\beta}\leq C$ if we take $C\geq1$. Hence, the final constant depends only on~$n$ and~$p$.
\end{proof}

The remaining part of this section is devoted to the proof an improved Hardy inequality in the Poincaré sense, stated in Theorem~\ref{thm_hardy+_p2}. 
Its proof is based on a modification of the ground state substitution method, that we have used in Theorem~\ref{thm_hardy_gs_p2}, and on a Poincaré inequality with weights stated next.

The following is a Poincaré inequality with exponent $p\geq1$ and a weight of the type $ \abs{x}^{-a} $, for functions with compact support on a hypersurface $M$ (more precisely, with support in a ball of radius $r$). 

\begin{proposition}
	\label{prop_poincare}
	Let $M$ be a smooth hypersurface of $\R^{n+1}$, $B_r=B_r(0)\subset\R^{n+1}$ the open ball of radius $r$ centered at the origin, $p\geq1$, and $ a\in[0,n) $. Then, for every $\varphi\in C^1_c(B_r\cap M) $ we have
	\begin{equation}
	\label{poinc_ineq}
	(n-a)^p\int_M\frac{\abs{\varphi}^p}{\abs{x}^a}\,dV\leq 2^{p-1}r^p\int_M \left(p^p\frac{\abs{\nabla_T \varphi}^p}{\abs{x}^a}+\frac{\abs{H\varphi}^p}{\abs{x}^a}\right)\,dV.
	\end{equation}
\end{proposition}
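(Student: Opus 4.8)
The plan is to follow the opening steps in the proof of Theorem~\ref{thm_hardy_ibp}, up to the intermediate inequality~\eqref{ineq_89}, and then use the support condition to transfer the weight. From~\eqref{ineq_89} I would simply discard the nonnegative term $a\int_M\left(\frac{x}{\abs{x}}\cdot\nu_M\right)^2\frac{\abs{\varphi}^p}{\abs{x}^a}\,dV$ on its left-hand side, obtaining
\[
(n-a)\int_M\frac{\abs{\varphi}^p}{\abs{x}^a}\,dV\leq\int_M\frac{\abs{\varphi}^{p-1}}{\abs{x}^{a-1}}\,\abs{p\nabla_T\varphi-\mathcal{H}\varphi}\,dV .
\]

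The \emph{key step} is the elementary observation that, since $\varphi$ is supported in $B_r\cap M$, on its support $\abs{x}<r$, and hence $\abs{x}^{1-a}=\abs{x}\cdot\abs{x}^{-a}\le r\,\abs{x}^{-a}$ --- valid regardless of the sign of $a-1$, since the extra power of $\abs{x}$ is positive. This turns the weight $\abs{x}^{-(a-1)}$ on the right into $r\,\abs{x}^{-a}$, matching the weight on the left. Then I would apply Hölder's inequality with exponents $p$ and $p'$, splitting the weight symmetrically as $\abs{x}^{-a}=\abs{x}^{-a(p-1)/p}\cdot\abs{x}^{-a/p}$, to get
\[
(n-a)\int_M\frac{\abs{\varphi}^p}{\abs{x}^a}\,dV\le r\left(\int_M\frac{\abs{\varphi}^p}{\abs{x}^a}\,dV\right)^{\frac{p-1}{p}}\left(\int_M\frac{\abs{p\nabla_T\varphi-\mathcal{H}\varphi}^p}{\abs{x}^a}\,dV\right)^{\frac1p} .
\]
If the first factor on the right vanishes there is nothing to prove; otherwise, dividing by it and raising to the power $p$ yields $(n-a)^p\int_M\abs{\varphi}^p\abs{x}^{-a}\,dV\le r^p\int_M\abs{p\nabla_T\varphi-\mathcal{H}\varphi}^p\abs{x}^{-a}\,dV$.

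To conclude, I would use the convexity inequality $\abs{v-w}^p\le 2^{p-1}\big(\abs{v}^p+\abs{w}^p\big)$ with $v=p\nabla_T\varphi$ and $w=\mathcal{H}\varphi$, together with $\abs{\mathcal{H}}=\abs{H}$, to bound $\abs{p\nabla_T\varphi-\mathcal{H}\varphi}^p\le 2^{p-1}\big(p^p\abs{\nabla_T\varphi}^p+\abs{H\varphi}^p\big)$; inserting this into the last inequality gives exactly~\eqref{poinc_ineq}. There is essentially no obstacle here: the argument is a short variant of the derivation of~\eqref{basic_hardy} in the proof of Corollary~\ref{cor_ckn}, the only difference being that there the choice $a=p$ was used to cancel the weight $\abs{x}^{-(a-p)}$ outright, whereas here the support condition $\abs{x}<r$ plays that role for an arbitrary $a\in[0,n)$, at the cost of the factor $r^p$.
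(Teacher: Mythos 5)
Your proof is correct and follows essentially the same route as the paper: both start from the Hardy inequality of Theorem~\ref{thm_hardy_ibp} (you from its intermediate form~\eqref{ineq_89}, the paper from the finished estimate~\eqref{hardy_ibp_p} as in the derivation of~\eqref{basic_hardy}), discard the nonnegative normal term, use the support condition $\abs{x}<r$ to trade the extra power of $\abs{x}$ for a factor of $r$, and finish with the convexity bound $\abs{p\nabla_T\varphi-\mathcal{H}\varphi}^p\le 2^{p-1}(p^p\abs{\nabla_T\varphi}^p+\abs{H\varphi}^p)$. The only difference is the immaterial order in which the support bound and H\"older's inequality are applied.
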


\begin{proof}
	As in the proof of Corollary~\ref{cor_ckn}, but with $a\in[0,n)$ instead of~$a=p$, from~\eqref{hardy_ibp_p} we obtain
	\begin{equation*}
	(n-a)^p\int_M\frac{\abs{\varphi}^{p}}{\abs{x}^a}\,dV
	\leq 2^{p-1}
	\int_M \frac{p^p\abs{\nabla_T\varphi}^p+ \abs{H\varphi}^p}{\abs{x}^{a-p}}\,dV.
	\end{equation*}
	Then, taking advantage of the fact that the support of $\varphi$ is contained in $B_r(0)$, we can bound $\abs{x}^p\leq r^p$ and obtain \eqref{poinc_ineq}.
\end{proof}

Now, we can prove Theorem \ref{thm_hardy+_p2}. Note that here we assume $p=2$ and $n\geq2$.

\begin{proof}[Proof of Theorem~\ref{thm_hardy+_p2}]
	As in the proof of Theorem~\ref{thm_hardy_gs_p2}, we use the ground state substitution $\varphi=v\omega $, where $ \omega(x)=\abs{x}^{-(n-2)/2}$. 
	We proceed as in the proof of Theorem~\ref{thm_hardy_gs_p2}, but in the right-hand side of \eqref{gs0} we use the 
	identity\footnote{
	For an exponent $p\neq2$, here one would use a well-known convexity inequality instead of this identity (see Lemma~2.6 and Remark~2.7 in~\cite{FS}, or~\cite[Lemma 4.2]{Lind}).}
	$\abs{a+b}^2=\abs{a}^2+2\,a\cdot b+\abs{b}^2$ for vectors $a,b\in\R^n$.
	Therefore, we find
	\begin{equation}
	\label{hardy+}
	\begin{split}
	&\hspace{-1.5cm}\int_M \left(\abs{\nabla_T\varphi}^2+ \frac{n-2}{2}\,\frac{\abs H\varphi^2}{\abs{x}}\right)\,dV
	\geq\frac{(n-2)^2}{4}\int_M\frac{\varphi^2}{\abs{x}^2}\,dV
	\\
	&\hspace{1.75cm}+\frac{n^2-4}{4}
	\int_M\left(\frac{x}{\abs{x}}\cdot\nu_M\right)^2\frac{\varphi^2}{\abs{x}^2}\,dV
	+\int_M\frac{\abs{\nabla_Tv}^2}{\abs{x}^{n-2}}\,dV.
	\end{split}
	\end{equation}
	
	Next, to control the last integral in \eqref{hardy+} from below, we use inequality~\eqref{poinc_ineq} with~$\varphi=v$, $p=2$, and~$a=n-2$. Observe that this forces $n\geq2$. In this way, we have
	\begin{equation}\label{poinc+}
	\int_M\frac{\abs{\nabla_Tv}^2}{\abs{x}^{n-2}}\,dV \geq \frac{1}{2r^2}\int_M\frac{v^2}{\abs{x}^{n-2}}\,dV-\frac{1}{4}\int_{M}\frac{\abs{Hv}^2}{\abs{x}^{n-2}}\,dV.
	\end{equation}
	Finally, combining \eqref{hardy+} and \eqref{poinc+}, and using the fact that $v^2/\abs{x}^{n-2}=\varphi^2$, \eqref{imprhardy_p2} is established.
\end{proof}

\begin{appendices}
	\section{Notation for tangential derivatives}\label{appendix_giusti}
	In the setting of hypersurfaces of Euclidean space, tangential derivatives can be defined in an elementary calculus way without using Riemannian geometry, for instance as presented in Giusti's book~\cite{G}. Throughout the paper, we adopt this definition of tangential derivatives, that we recall next. From it, one can define the tangential divergence of a vector field. Alternatively, one can define the tangential divergence intrinsically using Riemannian geometry, as done for instance in~\cite{Ch}. In this appendix, and for completeness, we introduce and compare these two notions in the setting of hypersurfaces of~$\R^{n+1}$.
	We start by giving the former definition, following~\cite{G}.
	\begin{definition}
		\label{def_div}
		Let $M$ be a smooth hypersurface of $\R^{n+1}$ with normal vector $\nu_M$.
		\begin{itemize}
			\item[(a)]
			Let $\varphi$ be a $C^1$ function defined on $M$.
			We define the $i-$th tangential derivative of $\varphi$, for $i=1,\dots,n+1$, as
			\[
			\delta_i\varphi:=\partial_i\varphi-\nu_M^i\sum_{j=1}^{n+1}(\partial_j\varphi)\nu_M^j,
			\]
			where $\nu_M^j$ is the $j-$th component of the normal vector $\nu_M$ to $M$ and $\partial_j\varphi$ is the $j-$th partial derivative of $\varphi$, once the function $\varphi$ has been extended to all of~$\R^{n+1}$.
			\item[(b)] 
			With $\varphi$ as in (a),
			we define the tangential gradient of $\varphi$ as the vector
			\[
			\nabla_T\varphi=\nabla\varphi-(\nabla\varphi\cdot\nu_M)\nu_M=(\delta_1\varphi,\delta_2\varphi,\dots,\delta_{n+1}\varphi).
			\]
			Note that $ \nabla_T\varphi\cdot\nu_M=0 $ for every $C^1$ function $\varphi$ defined on $M$.
			\item[(c)]	Let $Z$ be a $C^1$ vector field defined on $M$ with values in $\R^{n+1}$, not necessarily tangent to $M$, and whose components are $ Z^i $ with $ i=1,\dots,n+1 $. We define its tangential divergence as
			\begin{equation}\label{divT_def}
			\textnormal{div}_T Z= \sum_{i=1}^{n+1}\delta_i Z^i.
			\end{equation}
		
		\end{itemize}
	\end{definition}
	From the definitions, it easily follows that
			\begin{equation}
			\label{div1}
			\textnormal{div}_T(\varphi Z)=\nabla_T\varphi\cdot Z+\varphi\,\textnormal{div}_T Z.
			\end{equation}
			
	Observe that this definition of tangential derivatives is extrinsic and it does not give a basis of the $n$-dimensional tangent space of $M$, as the tangential derivatives~$\delta_i$ for $i=1,\dots,n+1$ are linearly dependent. However, if one is familiar with Riemannian geometry, then it is possible to check that, in the case of hypersurfaces of $\R^{n+1}$, the intrinsic Riemannian notion of divergence coincides with $\textnormal{div}_T$ defined in \eqref{divT_def}.
	We recall that the divergence of a \textit{tangent} vector field $Y$ on a general Riemannian manifold $(M,g)$ 
	is defined in an intrinsic way as
	\begin{equation}\label{intrinsic_div}
	\textnormal{div}Y=\textnormal{tr}\big(\xi\longmapsto\nabla_\xi Y\big),
	\end{equation}
	where $ \nabla $ is the Levi-Civita connection of $ (M,g) $. 
	Now, Proposition~II.2.1 in~\cite{Ch} states that, given two Riemannian manifolds $(M, g)$ and $(\overline{M}, \overline{g})$ with $M$ isometrically embedded in $ \overline{M} $ and whose Levi-Civita connections are $\nabla$ and $\overline{\nabla}$, then for every $p\in M$, $\xi\in T_pM$, and vector field $Y\in TM$ on $M$, we have that
	\begin{equation*}
	\nabla_\xi Y=(\overline{\nabla}_\xi Y)_T,
	\end{equation*}
	where $ (\overline{\nabla}_\xi Y)_T $ denotes the tangential component of $ \overline{\nabla}_\xi Y $ with respect to $ M $.
	Therefore, if $\overline{M}=\R^{n+1}$, $M$ is an isometrically embedded hypersurface of $\R^{n+1}$, and $Y$ is a tangent vector field on $M$, then we have 
	\begin{equation*}
	\text{div}Y=
	\textnormal{tr}\big(\xi\longmapsto\nabla_\xi Y\big)=\textnormal{tr}\left(\xi\longmapsto\left(\overline{\nabla}_\xi Y\right)_T\right)
	=\sum_{i=1}^{n+1}\delta_iY^i=\textnormal{div}_T Y,
	\end{equation*}
	where $\text{div}$ is defined in \eqref{intrinsic_div} and $\text{div}_T$ in~\eqref{divT_def}.

	Next, adopting the notion of tangential derivatives from Definition \ref{def_div}, we report a formula of integration by parts proved in~\cite{G}. For all $C^1$ functions $v$ and $w$ such that at least one of them has compact support on $M$, we have that 
	\begin{equation}\label{ibp_functions}
	\int_M\left(\delta_i v\right)w\,dV=-\int_Mv\left(\delta_iw \right)\,dV+\int_MvwH\nu_M^i\,dV,
	\end{equation}
	where $i\in\{1,\dots,n+1\}$, $\nu_M$ is the normal vector to $M$, and $H$ is the mean curvature of $M$.
	For the proof of~\eqref{ibp_functions} we refer to\footnote{
		We point out two typos in~\cite[Lemma~10.8]{G}: first, the mean curvature $H$ is missing in the statement, but not in the proof; second, there is a sign error in front of the integral in the right-hand side, both in the statement and in the proof. The correct statement is~\eqref{ibp_functions}.} 
	\cite[Lemma 10.8]{G} or to~\cite[Lemma 2.1]{C1}. If instead we consider a $C^1$ function $v$ and a $C^1$ vector field $Z$, such that at least one of them has compact support on~$M$, then from \eqref{ibp_functions} we easily deduce		
	\begin{equation}\label{ibp_vector}
	\int_M v\,\textnormal{div}_T Z\,dV
	=-\int_M \nabla_T v\cdot Z\,dV
	+\int_M v Z\cdot\mathcal{H}	\,dV,
	\end{equation}
	where $\mathcal{H}=H\nu_M$ is the mean curvature vector of $M$.
	Indeed, to show \eqref{ibp_vector} it is sufficient to write $\textnormal{div}_TZ=\sum_{i=1}^{n+1}\delta_i Z^i$ and apply \eqref{ibp_functions} on every term of the sum.
	
	Observe that, if $Z$ is tangent then the mean curvature term 
	in \eqref{ibp_vector} vanishes --- since $\mathcal{H}$ is normal to~$M$. 
	
	The following divergence formula with a boundary term is the analogue result to \eqref{ibp_vector}
	with $v\equiv 1$ when $Z$
	does not have compact support.
	Given a $C^1$ {\it tangent} vector field $Z$ defined on~$M$ and a smooth domain $\Omega\subset M$, 
	we have that
	\begin{equation}
	\label{app_divthm}
	\int_\Omega \textnormal{div}_T Z\,dV=\int_{\partial\Omega}Z\cdot \nu_\Omega\,dA,
	\end{equation}
	where $\nu_\Omega\in TM$ is the outward unit normal to $\Omega$. 
	This identity can be proved using a suitable modification of the argument in~\cite[Lemma 10.8]{G}.
	One can also deduce \eqref{app_divthm} from~\cite[Theorem~III.7.5]{Ch}, i.e., the divergence 
	formula on Riemannian manifolds. To this end, one must recall that in~\cite{Ch} the tangential 
	divergence is defined as in \eqref{intrinsic_div} and, in the setting of hypersurfaces of~$\R^{n+1}$, 
	definition \eqref{intrinsic_div} is equivalent to the one we gave in Definition~\ref{def_div}.
	
	\section{Optimal constants in the Michael-Simon and Allard inequality}\label{appendix_constant}
	
	For an integer $k\in [2, n] $, a $k$-dimensional submanifold $M$ of $\R^{n+1}$ with mean curvature $H$, and 
	a smooth domain $E\subset M$ with compact closure in $M$, the Michael-Simon and Allard inequality states that
	\begin{equation}\label{appendix_best_constant}
	\lvert E \rvert^{\frac{k-1}{k}}\leq C_1\textnormal{Per}(E)+C_2\int_E\lvert H \rvert\,dV,
	\end{equation}
	for some positive constants $C_1$ and $C_2$ depending only on $k$.
	Most of the literature on the topic of sharp constants for \eqref{appendix_best_constant}
	is focused on one of two important particular cases: either when the submanifolds 
	$M$ are minimal or when they are compact without boundary and we take $E=M$. The proofs in~\cites{A,MS}
	do not give sharp constants in any of these two situations.
	
	In the former case the mean curvature of $M$ is identically zero, and the problem is finding the optimal 
	constant $C_1$ in the isoperimetric inequality on minimal submanifolds of~$\R^{n+1}$. 
	Under the additional assumption that the submanifold is area minimizing, 
	Almgren~\cite{Alm} proved that the isoperimetric inequality with the Euclidean constant holds, 
	i.e., for every smooth domain $E\subset M$ with compact closure in $M$, one has
	\begin{equation}\label{iso}
	k\, \omega_k^{\frac1k}\abs{E}^{\frac{k-1}{k}}\leq  \textnormal{Per}(E),
	\end{equation}
	where $ \omega_k $ is the volume of the $ k $-dimensional unit ball.
	Back to the general context of non minimizers, in the case of two-dimensional minimal surfaces 
	of $\R^{n+1}$ (i.e., with $ k=2 $) some partial results have been available
	for a good number of years. Leon Simon obtained the desired inequality with half of the expected constant
	\[2\pi\abs{E}\leq  \textnormal{Per}(E)^2.\] 
	He never published the proof of this result, but it can be found in the papers~\cite{Choe,Top}. 
	In~\cite{Top}, Topping improved it to give a simple proof of the Michael-Simon and Allard inequality for 2-dimensional submanifolds of $\R^{n+1}$, not necessarily minimal. 
	The constant $2\pi$ in Simon's inequality on minimal surfaces was improved by Stone~\cite{Stone} 
	(the same improvement is attributed in~\cite{Choe} also to A. Ros), but still without achieving 
	the constant $4\pi$ conjectured in \eqref{iso}. See the survey~\cite{Choe} for a detailed exposition of 
	the problem. 
	Finally, the conjecture for arbitrary dimension~$k$ has 
	been very recently proved by Brendle~\cite{Brendle} in the 
	case of codimension 1 and 2. His method uses a clever extension of the proof 
	of the sharp Euclidean isoperimetric inequality found by the first author in~\cite{C08}. 
	Thus, both proofs use the solution of a Neumann problem, together with the ABP method. 
	In addition, Brendle's proof allows to characterize flat disks as the only cases in which equality
	is achieved.

	The second particular case of \eqref{appendix_best_constant} consists of $M$ being a compact manifold 
	without boundary and $E=M$. 
	Then, inequality~\eqref{appendix_best_constant} reads  
	\begin{equation}\label{aleksandrov}
	\lvert M \rvert^{\frac{k-1}{k}}\leq C_2\int_{M}\lvert H \rvert\,dV
	\end{equation}
	with $2\leq k \leq n$, and the problem of finding the optimal constant $ C_2 $ is still open.
	If~$M=\partial A$ and $A\subset\R^{n+1}$ is a smooth bounded domain which is also assumed to be convex, 
	then~\eqref{aleksandrov} holds with $k=n$ and equality is only achieved when $A$ is a ball, as a consequence of the classical Aleksandrov-Fenchel inequality~\cites{Ale1, Ale2}. 
	More recently, Guan and Li~\cite{GL}, and Huisken and Ilmanen~\cite{HI}, relaxed the convexity assumption 
	with weaker hypothesis on~$A$, obtaining the sharp result in their settings. 
	For a survey on the subject, see~\cite{CW}.
\end{appendices}
	
	\addsection*{Acknowledgments} The authors would like to thank Matteo Cozzi for interesting and useful discussions on the topic of this paper.

\end{document}